\DeclareMathOperator{\trace}{Tr}
\DeclareMathOperator{\Tr}{Tr}
\DeclareMathOperator{\col}{col}
\DeclareMathOperator{\diag}{diag}
\DeclareMathOperator{\MSD}{MSD}
\DeclareMathOperator{\EMSE}{EMSE}
\DeclareMathOperator{\MSE}{MSE}
\newcommand{\norm}[1]{\ensuremath{\| #1 \|}}
\newcommand{\abs}[1]{\ensuremath{\left\vert #1 \right\vert}}
\newcommand{\eps}{\ensuremath{\epsilon}}
\newcommand{\kron}{\otimes}
\newcommand{\re}{\mathbb{R}}
\newtheorem{Theorem}{Theorem}
\theoremstyle{definition}
\newtheorem{defn}{Def.}
\DeclareMathOperator{\E}{E}
\newcommand{\tilw}{\ensuremath{\widetilde{w}}}
\newcommand{\tilmu}{\ensuremath{\widetilde{\mu}}}
\newcommand{\abar}{\ensuremath{\bar{a}}}
\newcommand{\pbar}{\ensuremath{\bar{p}}}
\newcommand{\Lambdabar}{\ensuremath{\bar{\Lambda}}}
\newcommand{\lambdabar}{\ensuremath{\bar{\lambda}}}
\newcommand{\Thetabar}{\ensuremath{\bar{\Theta}}}
\newcommand{\Psitl}{\ensuremath{\widetilde{\psi}}}
\newcommand{\calMbar}{\ensuremath{\overline{\mathcal{M}}}}
\newcommand{\calM}{\ensuremath{\mathcal{M}}}
\newcommand{\calN}{\ensuremath{\mathcal{N}}}
\newcommand{\calL}{\ensuremath{\mathcal{L}}}
\newcommand{\calQ}{\ensuremath{\mathcal{Q}}}
\newcommand{\colone}{\ensuremath{\mathds{1}}}
\newcommand{\eqdef}{\triangleq}
\begin{document}

\title{Distributed Universal Adaptive Networks}


\author{Cassio G. Lopes ,~\IEEEmembership{Senior Member,~IEEE,}
        Vítor H. Nascimento,~\IEEEmembership{Senior Member,~IEEE,}
        Luiz F. O. Chamon,~\IEEEmembership{Member,~IEEE}
\thanks{C. G. Lopes and V. H. are with the Dept. of Electronics and Systems, Escola Politécnica, University of Sao Paulo, São Paulo, SP, Brasil, \{cassio.lopes,vitnasci\}@usp.br.\\ L. F. O. Chamon is with the Excellence Cluster for Simulation Technology of the University of Stuttgart, Stuttgart, Germany, luiz.chamon@simtech.uni-stuttgart.de.}

%
\thanks{This research was funded by the ELIOT project,  São Paulo Research Foundation (FAPESP) 2018/12579-7 and ANR-18-CE40-0030. The work of L. F. O. Chamon is funded by the Deutsche Forschungsgemeinschaft (DFG, German Research Foundation) under Germany's Excellence Strategy (EXC 2075-390740016).}

}

%
%

%

%
%

\maketitle

\begin{abstract}
Adaptive networks (ANs) are effective real time techniques to process and track events observed by sensor networks and, more recently, to equip Internet of Things (IoT) applications. \add{ANs operate over nodes equipped with collaborative adaptive filters that solve distributively an estimation problem common to the whole network. However, they \textit{do not} guarantee that nodes do not lose from cooperation, as compared to its non-cooperative operation; that poor nodes are rejected and exceptional nodes estimates reach the entire network; and that performance is uniform over all nodes. In order to enforce such properties, this work introduces the concept of distributed universal estimation, which encompasses the new concepts of local universality, global universality and universality with respect to the non-cooperative operation. We then construct a new cooperation protocol that is proven to be distributively universal, outperforming direct competitors from the literature, as shown by several simulations. Mean and mean-square analytical models are developed, with good agreement between theory and simulations.}

\end{abstract}


\begin{IEEEkeywords}
Adaptive Networks, Distributed Adaptive Processing, Sensor Networks, Internet of Things, Universal Estimation.
\end{IEEEkeywords}

%
\IEEEpeerreviewmaketitle



\vspace{-0.5cm}

\section{Introduction}
	\label{S:Intro}

\IEEEPARstart{I}{n} several applications, a network of interconnected agents is in charge of observing events in a field of interest, usually performing event-related tasks. Such events leave a space-time signature that may be registered by a number of sensors properly placed throughout the geographical area where the events take place \cite{Estrin2002}.

Applications \add{of this framework} are plentiful: detect \add{a signal of interest}; estimate physical quantities, such as temperature, pressure or wind velocity, solar incidence, the position and speed of a target, the spectrum of signals; \add{network synchronization}; identify structural failures, among others \cite{Cattivelli11d,Kulkarni2011,Yannick2012,Scaglione2007},  \cite{Lorenzo2013,Zhai2014,Scabin2016}. \add{In such applications, in the absence of a central node, an optimization problem common to the entire network must be solved distributively and cooperation among agents is either mandatory, or very desirable to promote improvement in network performance and robustness. The agents, or nodes, conduct partial processing over local data using low caliber processors and, via limited cooperation with nearby peers, the local results are collectively aggregated into a global solution that, ideally, should achieve the performance of an (hypothetical) central node omniscient of the network data. A subset of such problems is that of \textit{distributed estimation}, which is the scope of this work.}

\add{One of the strategies in the literature to promote a collective behavior towards an asymptotic global solution is the \textit{consensus strategy}, which usually consists of a distributed averaging of proper quantities related to the estimation problem, such as sample cross-covariance and auto-covariance, or even direct estimates \cite{Xiao2004, Scaglione2007, Scaglione2008, Schizas2009, Sandryhaila2014,kar_convergence_2011,Chen-PP-TSP2021}.
}

Most applications can be modeled as a vector of parameters to be estimated from the space-time data captured across the network. In this context, the concept of \textit{adaptive networks} (ANs) arose as an effective real time technique to estimate the application-related parameter vector \cite{Cassio-ICASSP07}, \cite{Cassio07i}, \cite{Cassio08d}, \cite{Cattivelli08d}. \add{When the comparison is meaningful, it has been shown that the cooperative strategy conducted by ANs may outperform that of consensus strategies  \cite{tu_diffusion_2012}. ANs are the focus of this work.}

In addition to data statistics, two factors define performance in ANs: the learning rules at the nodes and the cooperation protocol. The learning rule, such as the least mean-squares (LMS) or the recursive least-squares (RLS) rules \cite{Sayed08a,Diniz13a}, is selected according to performance requirements and the local available processor. The cooperation protocol must restrict communications to local interactions only, favoring energy savings. Typical cooperation protocols are the \add{incremental and the diffusion, with all their variants \cite{Cassio07i,Sayed2006, Cattivelli08d, Cassio2010-rand,Cassio08d,Plata2015,Cassio2008b,Chouvardas2011,Harrane2019,Jin2020,Jin2020b}.}

\add{In the incremental cooperation protocol, only one estimate is shared at a time and it is updated over a cycle visiting all the nodes exactly once \cite{Cassio07i,Sayed2006}. This technique allows for extreme energy savings, though it requires the definition of a Hamiltonian cycle across the network (an NP-hard problem) \cite{Bollobas1998} and the network becomes hostage to local node performance, which may vary greatly. Later, a randomized incremental version was proposed that avoided the Hamiltonian cycle and promoted, on average, a good degree of performance uniformity across the network \cite{Cassio2010-rand}.}

\add{The diffusion protocol \cite{Cassio08d,Cattivelli08d,Plata2015,Cassio2008b,Chouvardas2011,Arablouei2014,Sayin2014,Harrane2019,Jin2020,Jin2020b}} is usually preferred since it explores more efficiently the available information, \add{also providing a good performance uniformity across the network}, although at a higher energy cost, \add{as compared to the incremental protocols}. Whichever the diffusion type, at any node the protocol starts by fusing estimates retrieved from nearby nodes. The fusing is obtained via a local function, usually a fixed linear combination of the estimates. This fixed design follows specific rules, such as the uniform and the Metropolis,  which are node-degree-dependent convex combiners  \cite{Cassio08d}, and the relative-variance rule \cite{Tu11o}, which further accounts for node noise variance. A learning step follows, in which the fused estimate is injected into the local AF, which updates its estimate in response to the other node estimates and to its local data \cite{Cattivelli11d, Cassio08d, Cattivelli08d, Tu11o, Sayed14a}.

\add{Many works adopt the diffusion protocol and attempt to limit its cooperation complexity, saving energy and communication resources, while avoiding a corresponding network performance deterioration \cite{Cassio2008b}, \cite{Harrane2019, Arablouei2014, Sayin2014}. In \cite{Cassio2008b}, a probabilistic selection of a subset of neighbors dramatically decreased the local diffusion cooperation, thus with major energy savings, while maintaining the performance properties to a great extent. Other approaches propose reducing the cooperation load by transmitting compressed versions of the local estimates, as in \cite{Sayin2014}; or performing a double compression over the local learning and fusion steps before sharing quantities with other nodes \cite{Harrane2019}. In a similar vein, the scheme in \cite{Arablouei2014} proposes that each node transmits a subset of the local estimate entries to its neighbors at every iteration.}

\add{Another important line of work is comprised of selective cooperation policies over diffusion networks \cite{Cassio08d}, \cite{werner_distributed_2009}, \cite{Takahashi10d,Bes12n,Cassio2014,Jin2020,Jin2020b}.
The idea is not to blindly cooperate; instead, adopt rules that give emphasis to better nodes, or that improve estimation performance by, for example, transmitting only sufficiently novel information.}

\add{With the inception of ANs, it was rapidly noticed that adapting the local fusion combiners, assigning larger weights to select better nodes, without discarding the less-fortunate ones, yielded network performance improvement \cite{Cassio08d}, and different selective adaptive policies followed \cite{Takahashi10d,Bes12n,Cassio2014}. Also related are the works \cite{Jin2020,Jin2020b}, which elegantly adopt the classical universal adaptive convex  \cite{Jeronimo06m} and affine \cite{Candido2010} combinations at every node, but whose inputs are two independent and competing generic diffusion ANs; this generates an output AN whose \textit{average} network mean-square performance is guaranteed to be at least as good as the best average network performance between the two input competing ANs.
}

\add{Despite the improvement in average network mean-square  performance achieved by selective diffusion networks, some nodes may be better off working independently when their cooperative and non-cooperative performances are compared \cite{Cassio2014}. This raises a nontrivial question: when, or how, should a node cooperate or not cooperate?
In order to properly answer this question, in this work we depart from the standard standalone universal estimators \cite{Merhav98u,Singer99u}, adopted in \cite{Jin2020} and \cite{Jin2020b} at every node, and develop the concept of \textit{distributed universal networks}. In this new conceptual framework, an adaptive network will be considered universal if every node performs at least as well as the best available standalone node. This simple definition, due to the spatial data diversity, to the cooperation, and to the limited communication among nodes, unfolds into different kinds of universality: local universality, global universality, and universality with respect to (w.r.t.) the non-cooperative operation. These definitions are directly connected with desired properties of distributed adaptive systems, namely: (a) ability to reject bad nodes; (b) promotion of good nodes; (c) node performance homogeneity (See Section \ref{ssec:Defs_Universality}). Such properties are advocated here to assess what good performance means in the context of distributed adaptive estimation.
}

\add{After motivating proper definitions of distributed universality, a cooperation protocol is introduced that guarantees that nodes do not lose from cooperating: their performance will be always at least as good as if they operated individually, but often much better. The core idea is to preserve local estimates, while separately fusing the estimates received from neighboring nodes. A similar idea was presented in \cite{Bes12n}, in an effort to solely improve performance in heterogeneous networks, with a subsequent improved version in \cite{Bes2017}. However, the idea of recasting the distributed estimation problem in a new universal estimation framework was put forward in our preliminary work \cite{Cassio2014}, yielding a less complex and more efficient algorithm. Here, we extend that work in important ways: (a) formalize the aforementioned universality types (In \cite{Cassio2014} there were two types only); (b) prove that our adaptive distributed algorithm is indeed universal in the new sense; (c) develop analytic mean and mean-square models for our algorithm, for stationary and non-stationary cases; (d) provide comparisons with other relevant time-varying combiners from the literature \cite{Cassio08d,Takahashi10d,Bes12n,Bes2017}, in which the proposed algorithm stands out as the most efficient and the only one that is truly universal in all aspects.}

This paper is organized as follows. Section \ref{S:ANs} covers the fundamentals of adaptive networks, also introducing the main adaptive combiner strategies from the literature. \add{Section \ref{sec:D_U_AN} presents the original concept of universal estimation and how it should be upgraded to the distributed case, with a detailed discussion on the required new definitions of universality. In Section \ref{sec:U_protocol} the universal distributed algorithm is constructed. We prove that, under reasonable conditions, our algorithm is distributed universal.}
Section \ref{sec:Analysis} develops analytical models for the mean and mean-square algorithm evolution, also showing that the algorithm is stable and converges to the optimum vector in the mean, under typical conditions. The proposed strategy is then compared in Section \ref{S:Sims} to the two main competing algorithms from the literature, namely \cite{Takahashi10d} and \cite{Bes2017}, \add{also showing that the developed analytical mean-square model reasonably agrees with simulations.}  

\vspace{0.1cm}

\noindent \textit{Remarks on notation:} small font letters
refer to scalars and vectors, and capital letters to constants and matrices: $\epsilon$ is a scalar regularization factor; $M$ is the local filter order (a constant), and $A$ is the network constant adjacency matrix. We employ subscript indexing to denote time-varying vectors and matrices, and parentheses to describe time-varying scalars: at node $n$, $u_{n,i}$ is a (row) vector that collects the local scalar signal $u_n(i)$; $w_{n,i}$ is a local vector estimate for the network unknown vector $w^o$; and $H_{n,i}$ is a matrix that denotes the local learning rule at time $i$. This is usually clear from the context.

\vspace{-0.3cm}

\section{Adaptive networks with adaptive combiners}
	\label{S:ANs}

An adaptive network structure is modeled as an (un)directed graph \add{${\cal{G}}=(V,E)$}, where $V$ is the node set and $E$ is the edge set \cite{Bollobas1998}. Algebraically, it is convenient to represent the network by its adjacency matrix $A$, defined as $[A]_{n\ell}=1$ if nodes $n$ and $\ell$ are connected, and $[A]_{n\ell}=0$ otherwise. By definition a node is connected to itself, i.e., $[A]_{nn}=1$ for all $n\in V$. A connected network has a path connecting every two nodes $n$ and $\ell$. The neighborhood for node $n$ is the set ${\cal{N}}_n$
of nodes that have a direct connection with node $n$, including itself; that is, all the nodes that are at most one hop away from node $n$. The strict neighborhood ${\overline{\cal{N}}_n}$ of $n$ does not contain node $n$ itself; in other words, $\overline{\cal{N}}_n={\cal{N}}_n \backslash {n}$.

At time $i$, the $n$-th node has access to a scalar measurement $d_n(i)$ and to another signal $u_n(i)$, that is collected into an $1\times M$ local row regressor vector\footnote{The regressor may also have a more general structure. For instance, in adaptive antennas,  $u_{n,i}=[u_{n,1}(i)~~u_{n,2}(i)~~\dots ~~u_{n,M}(i)]$ \cite{Sayed08a}.}
\begin{equation}
    u_{n,i}\eqdef[u_n(i)~~u_n(i-1)~~\dots ~~u_n(i-M+1)]\text{.}\label{eq:tap.delay}
\end{equation}
The data model is then defined via a known local function $f_n$, subject to noise $v_n(i)$. Typically, $f_n$ is linear in terms of an unknown $M\times 1$ global vector of parameters $w^o$, i.e.,
\begin{equation}\label{eq:data_model}
d_n(i) = f_n[u_{n,i}] + v_n(i) = u_{n,i}w^o + v_n(i),
\end{equation}
\add{or by a linear-in-the-parameters nonlinear model such as a truncated Volterra series \cite{Nascimento2014}.}

The goal of the $N$-node network is to estimate the unknown vector $w^o$ from the available space-time data set $\{d_n(i),u_{n,i}\}$, $n=1,\ldots,N$.  Since all nodes have the common goal $w^o$, it makes sense to cooperate, which not only improves overall performance, but may also enforce stability over the distributed adaptive process running at the nodes. For that matter, node $n$ runs a local adaptive filter of the form
\begin{equation}\label{eq:AF}
    \psi_{n,i} = \psi_{n,i-1} + H_{n,i}u_{n,i}^T[d_n(i)-u_{n,i}\psi_{n,i-1}] \text{.}
\end{equation}
Equation (\ref{eq:AF}) represents a stand-alone AF running locally and returning at time $i$ an $M\times 1$ estimate $\psi_{n,i}$ for the unknown global vector $w^o$, where $H_{n,i}$ is an $M\times M$ positive definite matrix that defines the local adaptive rule. For a scalar step-size $\mu_n$, the most common choices are $H_{n,i}=\mu_n I$, for the LMS rule; and $H_{n,i}=\frac{\mu_n}{\|u_{n,i}\|^2+\epsilon} I$, for the normalized LMS (NLMS) rule, where $0<\epsilon\ll1$ is a small regularization factor \cite{Diniz13a}. Other rules are also possible.

Associated with node estimates are figures of merit that assess performance. They are inherited from standard adaptive filtering: the mean-square error (MSE), the excess mean-square error (EMSE), and the mean-square deviation (MSD). For node $n$, they are respectively defined as
\begin{equation}\label{eq:Local_Errors}
    \begin{aligned}
        \MSE_n(i) &~~\eqdef ~~E e^2_n(i) ~ = ~E[d_n(i)-u_{n,i}\psi_{n,i-1}]^2\\
        \EMSE_n(i) &~~\eqdef ~~E [u_{n,i}w^o-u_{n,i}\psi_{n,i-1}]^2\\
        \MSD_n(i) &~~\eqdef ~~E \|w^o-\psi_{n,i-1}\|^2.
    \end{aligned}
\end{equation}
The error definitions above are \textit{local quantities} and depend on \textit{which estimate is used locally} at the nodes to fulfill their tasks. In \eqref{eq:Local_Errors} it is assumed that the estimate $\psi_{n,i-1}$ is used locally, which is compatible to the case when the AFs evolve independently from other nodes. However, when cooperation takes place, additional definitions are needed (see below).

Cooperation may be achieved by fusing nearby estimates $\{\psi_{\ell,i}~,~\ell\in\calN_n\}$, in terms of local scalar combiners $c_{n\ell}$ to be designed. The resulting fused estimate $\phi_{n,i}$ is then injected into the learning step, i.e., into the local AF. Collecting the fusion and the learning steps results in the standard Diffusion LMS \cite{Cassio-ICASSP07,Cassio08d}, given at node $n$ by:
\begin{align}
    \phi_{n,i-1} &= \sum_{\ell \in {\cal{N}}_n} c_{n\ell} \psi_{\ell,i-1},    \label{eq:fusion}
	\\
	\psi_{n,i} &= \phi_{n,i-1} + H_{n,i}u_{n,i}^T[d_n(i)-u_{n,i}\phi_{n,i-1}]~\text{.}\label{eq:adapt}
\end{align}
Notice that the fusion step (\ref{eq:fusion}) aggregates space-time information from the neighborhood and tends to be a (much) better estimate for $w^o$ than $\psi_{n,i-1}$ in (\ref{eq:AF}). Subsequently, $\phi_{n,i-1}$ is used as an initial condition at time $i$ in the learning step \eqref{eq:adapt}, so that the local AF responds not only to its previous local estimate $\psi_{n,i-1}$, but also to those of its neighbors\footnote{Note that the error definitions \eqref{eq:Local_Errors} can be given in terms of $\psi_{n,i-1}$ or $\phi_{n,i-1}$, giving rise to the two versions of the diffusion protocol: respectively, combine-then-adapt (CTA) or adapt-then-combine (ATC) \cite{Sayed14a}.}. This is at the heart of the concept of an adaptive network: although the local node processes only local data, the fusion step couples local learning with nearby nodes. As every node $n$ proceeds the same way, the entire network adapts in real-time in order to track $w^o$ cooperatively, and in a fully distributed manner, from the observed space-time data $\{d_n(i), u_{n,i}\}$.

The aggregate estimate $\phi_{n,i-1}$ in (\ref{eq:fusion}) can be interpreted as a weighted least-squares estimate of $w^o$ given the received estimates $\{\psi_{\ell,i-1}\}$. This implies that the set $\{c_{n\ell}\ge 0\}$ must be convex (i.e., satisfy $\sum_{\ell\in{\cal N}_n}c_{n\ell}=1$) in order for the estimates to be unbiased  \cite{Cassio08d,Cattivelli08d}.  Several simple topology-dependent designs have been proposed, such as the uniform rule $c_{n\ell}=\frac{1}{|{\cal{N}}_n|}$,
%
%
where $|\calN_n|$ is the degree (number of connections) of node $n$; or the Metropolis rule, which employs, for nodes $n$ and $\ell$
\begin{eqnarray}\label{eq:Metropolis}
c_{n\ell}=\begin{cases}
  1/\max(|{\cal{N}}_n|,|{\cal{N}}_\ell|), ~&{\rm if}~ n\neq\ell~ {\rm{are}~ linked;}\\
   0, ~&{\rm for}~ n~ {\rm{and}}~ \ell ~not~ {\rm linked;}\\
  1-\sum_{\ell\in\bar{\cal{N}}_n}c_{n\ell},~~~~&{\rm for}~ n=\ell.
\end{cases}
\end{eqnarray}
Another rule is based on relative variance \cite{Tu11o}, which assigns $c_{n\ell}$ to be inversely proportional to the noise variance $\sigma^2_{v,n}$:
\begin{eqnarray}\label{eq:error_cov}
c_{n\ell}=\begin{cases}
  \frac{\sigma^{-2}_{v,n}}{\sum_{\ell\in {\cal{N}}_n}\sigma^{-2}_{v,\ell}}, &{\rm if}~~ n\neq\ell~~ {\rm{are}~~ linked;}\\
   0, &{\rm for}~ n~ {\rm{and}}~ \ell ~~not~~ {\rm linked.}
\end{cases}
\end{eqnarray}
Such combiners are typically organized into an $N\times N$ matrix $C=[c_{n\ell}]$, which will be  stochastic for the uniform combiner and for \eqref{eq:error_cov}, i.e., for  $\colone = \text{col}[1~~1~~1\dots 1~~1]$ (with length given by the context), we have that $C\colone=\colone$. Combiner (\ref{eq:Metropolis}) leads to a doubly stochastic matrix: $\colone^T C=\colone^T$ and $C\colone=\colone$ \cite{Gallager1995}.

The limited performance of fixed combiners led to the introduction of adaptive combiners $c_{n\ell}(i)$ that are able to account for network diversity and time-varying statistics \cite{Cassio08d,Takahashi10d,Bes12n,Cassio2014,Bes2017}. \textit{Adaptive Diffusion} \cite{Cassio08d} was inspired by parallel-independent combinations of AFs  \cite{Jeronimo06m} and weighs local and neighborhood estimates using fixed combiners $\overline{c}_{n\ell}$, that are convex over $\overline{\calN}_n$, and \textit{one} adaptive  combiner $\lambda_n(i)$ per node. Explicitly,
\begin{equation}
	\label{E:AdaptiveDiffusion}
\begin{aligned}
	\phi_{n,i-1} &= \sum_{\ell \in \overline{\calN}_n} \overline{c}_{n\ell} \, \psi_{\ell,i-1},
	\\
	w_{n,i-1} &= \lambda_n(i) \, \psi_{n,i-1} + [1 - \lambda_n(i)] \phi_{n,i-1},\\
	\psi_{n,i} &= w_{n,i-1} + H_{n,i} u^T_{n,i}(d_n(i)-u_{n,i}w_{n,i-1}),
\end{aligned}
\end{equation}
where the estimate $\phi_{n,i-1}$ fuses estimates $\{\psi_{\ell,i-1}, \ell \in \overline{\calN}_n\}$ from the strict neighborhood via fixed combiners $\{\overline{c}_{n\ell}\}$. The local \add{cooperative filter output} $w_{n,i-1}$ fuses adaptively the local estimate $\psi_{n,i-1}$ with $\phi_{n,i-1}$ via \add{the adaptive parameter} $\lambda_n(i)$, which minimizes the local output error $e_n=d_n(i)-u_{n,i}w_{n,i-1}$ in the mean-square sense.

Later, an alternative approach was taken by adopting $|\calN_n|$ different adaptive combiners per node, one per estimate $\psi_{\ell}$ received from the neighborhood $\calN_n$ \cite{Takahashi10d}. It calculates an approximate $|\calN_n|\times|\calN_n|$ matrix $Q_{n,i-1}^{\text{MSD}}$ to the local (unknown) covariance matrix for the estimate error vector $\psi_{n,i-1}-w^o$, in order to minimize the local $\MSD_n(i)$ w.r.t. the $|\calN_n|$ adaptive combiners captured into the local vector $\{c_{n,i}\}$; this algorithm is referred to as the MSD-algorithm (MSD-alg). Similarly to the Adaptive Diffusion above, a locally fused estimate $\phi_{n,i-1}$ is injected into the local AF. Upon receiving the estimates $\{\psi_{\ell,i-1}\},~\ell\in \calN_n$ from the neighborhood, and letting $S_n^{\text{MSD}} = (I-\frac{\colone \colone^T}{|\calN_n|})$, where $\colone$ is $|\calN_n|\times 1$, the algorithm at node $n$ becomes
\begin{equation}
	\label{E:MSDSupervisor}
\begin{aligned}
    \left[Q_{n,i-1}^{\text{MSD}}\right]_{k\ell}&=[\psi_{\ell,i-1}-\psi_{\ell,i-2}]^T[\psi_{k,i-1}-\psi_{k,i-2}], k, \ell\in \calN_n,\\
    \phi_{n,i-1} &= \sum_{\ell\in \calN_n}[c_{n,i-1}]_\ell\psi_{\ell,i-1},\\
    \psi_{n,i} &= \phi_{n,i-1}+H_{n,i} u_{n,i}^T (d_n(i)-u_{n,i} \phi_{n,i-1}),\\
	c_{n,i} &= c_{n,i-1} - \mu_{M,n} \, S_n^{\text{MSD}} Q_{n,i-1}^{\text{MSD}} c_{n,i-1}\text{,}
\end{aligned}
\end{equation}
where matrix $Q_{n,i-1}^{\text{MSD}}$ is $|\calN_n|\times|\calN_n|$, $[c_{n,i}]_\ell$ is the $\ell$-th element of the combiner vector $c_{n,i}$ and $\mu_{M,n}$ is a scalar step-size that depends on two other parameters $\kappa_M$ and $\eps_M$, according to Equations (14) and (18) in \cite{Takahashi10d}. The initial conditions for the algorithm are $\colone^Tc_{n,-1}=1$, $[c_{n,i}]_\ell \ge 0$, and $\psi_{n,-1}=\psi_{n,-2}=0_{M\times 1}$. Here too both $\psi_{n,i-1}$ or $\phi_{n,i-1}$ can be used to define the output errors\footnote{For the MSD supervisor, we use the latter in Sec. \ref{S:Sims}, since this usually results in better performance.}.

In \cite{Bes12n}, a least-squares (LS) adaptive combiner algorithm (LS-alg) was proposed with an important change in the protocol: keep the local AF estimate $\psi_{n,i-1}$ adapting independently from the rest of the network. In other words, a set of $|\overline{\calN}_n|$ adaptive combiners collected into a local vector $c_{n,i}$ fuses the local independent AF estimate $\psi_{n,i-1}$ with the strict neighborhood estimates $\{w_{\ell,i-1},~\ell\in\overline{\cal N}_n\}$, generating a local estimate $w_{n,i}$ that is ready for use. The procedure is the same as in the original Adaptive Diffusion, with a subtle, yet effective, difference: $w_{n,i}$ is \textit{not} injected into the local AF. The original work had instabilities in the calculation of the combiners, as acknowledged by the authors, so that a stable and better version was published later in \cite{Bes2017}, and is described below. The identity matrix $I$ is $|\overline{\calN}_n|\times|\overline{\calN}_n|$ and $\colone$ is $|\overline{\calN}_n|\times 1$:
\begin{equation}
	\label{E:LSAdaptiveSupervisor}
\begin{aligned}
	\Tilde{y}_{n,i} &= [\, u_{n,i} ( w_{\ell,i-1} - \psi_{n,i-1} ) \,]_{\ell \in \overline{\calN}_n}
		\quad ( |\overline{\calN}_n|\times 1 )
	\\
	e_n(i)&=d_n(i)-u_{n,i}\psi_{n,i-1}
	\\
	P_{n,i} &= \sum_{p = 1}^{i} \gamma_n^{i-p} \, \Tilde{y}_{n,p} \Tilde{y}_{n,p}^T~,\quad z_{n,i} = \sum_{p = 1}^{i} \gamma_n^{i-p} \, \Tilde{y}_{n,p}
	\\
	c_{n,i} &= (P_{n,i}+\epsilon_{LS} {I})^{-1} \, z_{n,i}
	\\
	w_{n,i} &= [1 - \colone^T c_{n,i}] \, \psi_{n,i-1} + \sum_{\ell \in \overline{\calN}_n} [c_{n,i}]_\ell \, w_{\ell,i-1}
		\text{,}\\
	\psi_{n,i} &= \psi_{n,i-1}+ H_{n,i} u^{T}_{n,i} [ d_n(i) - u_{i} \psi_{n,i-1} ],
\end{aligned}
\end{equation}
where $[c_{n,i}]_\ell$ is again the $\ell$-th element of vector $c_{n,i}$, $0\ll\gamma_n<1$ is a local forgetting factor and $\epsilon_{LS}>0$ ensures invertibility in $P_{n,i}+\epsilon_{LS} I$. The estimate $w_{n,i}$ tends to be better than $\psi_{n,i-1}$, thus it is adopted locally. As such, the error quantities \eqref{eq:Local_Errors} for \eqref{E:LSAdaptiveSupervisor} must be updated in terms of $w_{n,i}$.

\add{
The more recent works in \cite{Jin2020} and \cite{Jin2020b} explore directly the original concept of universality at every node. At each iteration, each node runs two independent diffusion process represented by two local adaptive filters whose estimates, $w_{1,n,i}$ and $w_{2,n,i}$, are generated cooperating with nearby nodes and feed a local combiner
\begin{equation}
    w_{n,i} = \lambda_1(i) w_{1,n,i}+\lambda_2(i) w_{2,n,i},
\end{equation}
whose combiners $\lambda_1(i)$ and $\lambda_2(i)$ are chosen to be either convex \cite{Jin2020b}, or affine \cite{Jin2020}. The output estimate $w_{n,i}$ is guaranteed to be classically universal in the mean-square error sense. As such, each node output is at least as good as its two input diffusion processes $w_{1,n,i}$ and $w_{2,n,i}$. The overall result is that the output network average performance is at least as good as the best average network performance between the two input diffusion processes. This important contribution is not universal in any of the proposed universality types introduced in this work. This is because the output estimates $w_{n,i}$ at every node are locally confined, they are not propagated to the neighborhood: there is no network level feedback and no network learning takes place, in the sense introduced in \cite{Cassio2014}.
}

Extensive simulations show that both algorithms (\ref{E:MSDSupervisor}) and (\ref{E:LSAdaptiveSupervisor}) consistently outperform both the original diffusion LMS \cite{Cassio08d} and its adaptive version (\ref{E:AdaptiveDiffusion}). Although effective at improving performance, (\ref{E:MSDSupervisor}) and (\ref{E:LSAdaptiveSupervisor}) are \textit{not} distributed universal algorithms as the one proposed in \cite{Cassio2014}, which is extended and studied in detail in this work.

\vspace{-0.2cm}
\section{Distributed Universal Estimation}\label{sec:D_U_AN}

Our first task is to extend the concepts of universal estimation  \cite{Merhav98u,Singer99u} to the distributed case.

The design of an AF takes place by optimizing some figure of merit, say mean-square error, in terms of a set of parameters $\theta$, which may include the filter order $M$, the step-size $\mu$, the forgetting factor $\gamma$ for the RLS filter, the rank parameter $K$ for the APA filter, etc. Traditionally, such parameters must be designed from not always accurate analytical models that may depend on unknown quantities, so that empirical tests must be made. When the scenario is time-varying, the design is an even more challenging task.

In this context,  universal estimation is a change in the design paradigm: instead of choosing a fixed parameter set $\theta$ under limited knowledge, select a pool of $K$ candidates $\{\theta_k\}$ for $\theta$, each of them forming an individual estimator, or expert, and present them to a supervisor, which is ultimately in charge of generating a reasonable estimate $w$ for $w^o$ by consulting the pool of experts. The central question is: what is a reasonable estimate? The answer to that is the very definition of universal estimation: an estimator is considered universal when its supervisor is able to at least match the performance of the best individual expert in the pool, in terms of the adopted figure of merit.  Within a class, the pool $\{\theta_k\}$ has to be rich enough to span the unknown optimal $\theta$. Of course, this might represent an explosion in computational complexity, so care must be taken to find a trade-off.
The supervisor is any function that consults the pool of experts and delivers a universal estimate w.r.t. the adopted figure of merit.

Such ideas have been extensively and successfully explored in the literature in the context of combinations of adaptive filters \cite{Jeronimo06m,Chambers06c}, in terms of pools of filters with different step-sizes, filter orders, or even different learning rules \cite{Azpicueta08n,Magno08i,Vitor09t,Cassio10a,Chamon12c}.
The supervisor admits different designs, but one that is widely adopted and is efficient is the convex supervisor, which is an activation function of a free parameter to be adapted \cite{Cassio10a}.

\subsection{Distributed universality}\label{ssec:Defs_Universality}

\add{Typical distributed adaptive systems rely on several nodes consulting multiple data sources across the field of interest, therefore subject to a natural spatial diversity. In general, cooperation among the nodes is desirable, though it has to be implemented under limited communications. The combination of spatial diversity and limited communications may drive different nodes to different performances, which might not be acceptable in most applications. As such, a intuitive set of desired properties for ANs may be defined and will guide the development of a distributed universal protocol to operate in any network:}
\begin{enumerate}
    \item[A)] Ability to reject a bad node;
    \item[B)] Ability to exploit an exceptional node;
    \item[C)] Node performance homogeneity.
\end{enumerate}

Rejecting a bad node (A) relates to the ability of avoiding using poor estimates that could degrade the performance of other nodes in the network. This is paramount in applications such as remote sensing networks, where sensor damage can degrade the information provided by nodes. Similarly, the ability to exploit estimates from an exceptional node (B) lends robustness to the AN: nodes operating in poor conditions can take advantage of better nodes. Finally, performance homogeneity (C) is fundamental if the network is to operate in a fully distributed manner.  Indeed, each node must ultimately rely on its local estimate to perform actions, such as alerting to the presence of an intruder or anomalies in the field \cite{Cattivelli11d,Allan2018}, or even controlling some environmental variable \cite{Estrin2002,Plata2015}. In many ways, promoting good performance across all nodes is more important than having better average performance with some nodes (much) worse than others.

In order to address the aforementioned desired properties, three main points must be tackled: define the experts; select figures of merit; and \add{define what is a supervisor when multiple data sources are consulted and cooperation among nodes is a requirement.}

In the AN case, the selection of the pool of experts is natural and is comprised of the $N$ adaptive nodes attempting to estimate $w^o$ from the data available across the network. The required expert diversity is guaranteed by the natural space-time data diversity, and/or by employing different AF parameters at the nodes, or even different learning rules.

Typical figures of merit  adopted for ANs are global network quantities: average MSE, EMSE and MSD, defined from the local quantities previously presented in \eqref{eq:Local_Errors}:
\begin{equation}\label{eq:Network_Errors}
    \begin{aligned}
    \MSE(i) &= \frac{1}{N}\sum_{n=1}^N \MSE_n(i),\\
    \EMSE(i) &= \frac{1}{N}\sum_{n=1}^N \EMSE_n(i)~~\text{(Network quantities),}\\
    \MSD(i) &= \frac{1}{N}\sum_{n=1}^N \MSD_n(i).
    \end{aligned}
\end{equation}
These metrics provide a valid overall measure of network performance and are widely used in the literature, with or
without cooperation. Cooperation does improve network performance according to these metrics, even if not everybody in the network will experience an improvement, as compared to the non-cooperative operation \cite{Cassio2008b,Bes12n}. This implies that we must look carefully into the way nodes cooperate, and should discriminate performance across the network by also inspecting the local quantities $\MSE_n(i)$, $\EMSE_n(i)$ and $\MSD_n(i)$.

One key point of universal estimators is that expert integrity is preserved, which implies that at some level they should work independently. This is because cooperation might lead to the poor performance of some experts to contaminate that of the good ones, and who's whom is application-dependent or may vary over time. In the AN case, this property is usually violated when cooperation is implemented. On the other hand, without cooperation the global performance of ANs is deterred to a great extent; furthermore, in some applications, such as source localization and scalar field estimation applications, nodes \textit{must} cooperate in order to solve the global problem in a distributed manner \cite{Yannick2012,Scabin2016}. Therefore, in order to enforce the desired properties of ANs, cooperation demands that the definition (def.) of universality be extended.

\begin{defn}[Local universality]
	\label{D:LocalUniversal}

	A node $n$ of an AN is said to be locally universal when it holds that $n$ is at least as good as the best node in its neighborhood, i.e{.}, the best node $m \in \calN_n$.

\end{defn}
\begin{defn}[Global universality]
	\label{D:GlobalUniversal}

	An AN is said to be globally universal when for all nodes $n \in \{1,\dots,N\}$ it holds that $n$ is as good as the best node in the network, i.e., the best node $m \in \{1,\dots,N\}$.

\end{defn}
\begin{defn}[Universality w.r.t. the non-cooperative strategy]
	\label{D:NonCoopUniversal}

	An AN is said to be universal with respect to the non-cooperative strategy when for all nodes $n \in \{1,\dots,N\}$ it holds that node $n$ performs at least as well as if it were independent of the rest of the network.

\end{defn}
\begin{defn}[Asymptotic universality]
	\label{D:AsymptoticUniversal}

	An AN is said to be asymptotically universal, when it is universal for $i \to \infty$, i.e{.}, at steady-state.

\end{defn}

These definitions are inspired by those found in the contexts of universal prediction~\cite{Merhav98u} and theory of individual sequences~\cite{CesaBianchi99p}. They are straightforward and intuitive, although nontrivial. For instance, Def.~\ref{D:NonCoopUniversal} might seem obvious, but it is often violated in standard diffusion ANs~\cite{Cassio08d}, as is the case of Def.~\ref{D:LocalUniversal}.

\add{Figure \ref{fig:UniversalDefs} depicts examples in a hypothetical four-node connected network to illustrate the different definitions of universality.} Considering asymptotic behavior~(Def.~\ref{D:AsymptoticUniversal}), it is straightforward to see that local universality~(Def.~\ref{D:LocalUniversal}) implies both rejection of bad nodes and the exploitation of exceptional ones. Moreover, for a connected AN with an undirected graph, if every node is locally universal, then global universality~(Def.~\ref{D:GlobalUniversal}) follows (if the network is undirected and locally universal, then the performance of each pair of connected nodes must be equal.  If the network is connected, there is a path between every pair of nodes; thus the network must be globally universal). Thus, globally universal networks not only guarantee that underperforming nodes are isolated, but also that all nodes take advantage of the performance of superior ones. What is more, global universality guarantees node performance homogeneity. Finally, \add{rejecting poor nodes} is clearly related to the concept of universality w.r.t{.} the non-cooperative strategy~(Def.~\ref{D:NonCoopUniversal}), as it requires that cooperation only improves local performance.

Notice that Definitions~\ref{D:GlobalUniversal} and~\ref{D:NonCoopUniversal} denote two different forms of distributed universality. Indeed, it is possible for ANs to be globally universal without being universal w.r.t{.} the non-cooperative strategy and vice-versa. In fact, global universality alone only leads to performance homogeneity across nodes. Universality w.r.t{.} the non-cooperative strategy, on the other hand, guarantees that cooperating is the best strategy for each individual node in the network instead of only for the network on average. Both concepts must apply to obtain all the aforementioned properties for ANs. Definition \ref{D:AsymptoticUniversal} is a realistic definition, in view of the limited communications and energy within the network: in a large network, with a large radius \cite{Bollobas1998}, an exceptional node estimate that is several hops away from another given node will take several algorithm cycles to be broadcast network-wide, making universality necessarily an asymptotic property.

\begin{figure}[tb!]
	\begin{minipage}[c]{0.32\columnwidth}
		\centering
		\includegraphics[width=\columnwidth]{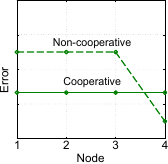}
		\small{(a)}
	\end{minipage}
	\hfill
	\begin{minipage}[c]{0.32\columnwidth}
		\centering
		\includegraphics[width=\columnwidth]{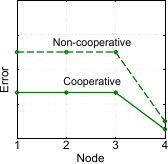}
		\small{(b)}
	\end{minipage}
	\hfill
	\begin{minipage}[c]{0.32\columnwidth}
		\centering
		\includegraphics[width=\columnwidth]{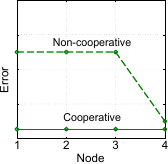}
		\small{(c)}
	\end{minipage}
	\vspace*{-3pt}
	\caption{Distributed asymptotic universality (Def. \ref{D:AsymptoticUniversal}) in ANs:
				(a)~Globally universal~(Def.~\ref{D:GlobalUniversal}) but not universal w.r.t{.} the non-cooperative strategy~(Def.~\ref{D:NonCoopUniversal});
				(b)~Universal w.r.t{.} the non-cooperative strategy~(Def.~\ref{D:NonCoopUniversal}), but not globally universal~(Def.~\ref{D:GlobalUniversal});
				(c)~Globally universal~(Def.~\ref{D:GlobalUniversal}) and universal w.r.t{.} the non-cooperative strategy~(Def.~\ref{D:NonCoopUniversal}).
				}
		\label{fig:UniversalDefs}
\end{figure}

\subsection{\add{The quest for a distributed supervisor}}\label{ssec:Dist_Supervisors}

\add{The first point to make is that a single supervisor directly imported from the standard single data source case \cite{Jeronimo06m,Chambers06c,Cassio10a} is not viable to implement distributed universality. This is because in a network with $N$ distributed experts consulting $N$ data sources, the access of the single supervisor to all the expert estimates would require its centralization: nodes report their estimates to the sole supervisor which promptly reports back a universal estimate for local use\footnote{\add{For instance, the MSE for each expert could be calculated and the supervisor would select the estimate corresponding to the expert with the smallest MSE.}}. Here universality is always guaranteed and its original definition is enough, since the network will access the best estimate and it will be the same for everybody. However, a central node is highly undesirable as it represents a catastrophic failure point for the network and the amount of communication resources (energy, more powerful transceivers, etc.) is prohibitively large; and it might be infeasible simply due to lack of connectivity, either because links are not available, or because routing protocols may impose too much overhead and excessive delays.}

\add{A second strategy would be to implement a fully connected network. In this setup, every node may be seen as a central node, and this hints at placing $N$ supervisors, one per node, embedded with the local expert. A given supervisor at node $k$ receives estimates from all the network experts and correctly generates the best estimate for local use. Due to the full connectivity, this may be replicated at all other nodes with their respective local supervisors, so that it is guaranteed that every node will have an estimate that is the best the network has to offer and all estimates will be statistically equivalent, since every node has access to the same set of experts. As a consequence, the original universality concept suffices too in this scheme. However, the implementation of this level of connectivity demands even larger resources than the centralized node case. Thus, it is also impractical.}

\add{We are left with the realistic scenario, mentioned earlier in this section: nodes have access only to their nearby peers, with different degrees of connectivity and with multiple data sources that reflect spatial diversity: these facts combined do not guarantee in general that a node will never loose from cooperation, that the network has the ability to reject bad nodes and/or promote exceptional ones, or even that the node performance will be uniform. In this scenario, we establish the main conceptual leap from standard universal estimation: define $N$ supervisors, one per node, and let the \emph{supervisors cooperate}, instead of direct expert cooperation. In summary, we propose that a \textit{distributed supervisor} may be implemented by a set of $N$ collaborative supervisors that shelter their local experts from the outer world. In such realistic scenarios, existing distributed adaptive systems may present, or not, the different kinds of universality formalized earlier. In response to that, in the following section we show in detail how to construct a cooperation protocol that explores the idea of distributed supervision, and that is proven to be universal w.r.t{.} all the Definitions \ref{D:LocalUniversal}--\ref{D:NonCoopUniversal}.
}

\section{A distributed universal cooperation protocol}\label{sec:U_protocol}

Why should a node cooperate, if its performance might deteriorate? Should cooperation in the name of a greater good be enough? Such questions lie at the heart of \add{what distributed supervision should deliver in the distributed multisource case.}

To begin with, a more inviting cooperation protocol should guarantee that a node never loses from cooperating; its performance at least remains the same, as compared to its independent operation. The idea is that all nodes improve, with the possible exception of the best node
in the network, which should not worsen its non-cooperative performance. Due to topology constraints, a generic node is unaware of which, or where, is the best node in the network. If the protocol assures that there will be no losses in performance, then cooperating becomes an interesting deal.

\subsection{Constructing the protocol}

A protocol that promotes distributed universality with respect to the introduced Definitions \ref{D:LocalUniversal}--\ref{D:NonCoopUniversal} has two steps.

Firstly, notice that the concept of universality w.r.t{.} the non-cooperative strategy~(Def.~\ref{D:NonCoopUniversal}) motivates the idea of protecting local estimates from network perturbations, an idea that was also proposed in~\cite{Bes12n}, albeit for different reasons. Indeed, allowing each node to operate as if it were independent of the network is a simple way to guarantee that its estimation process is not disturbed by underperforming neighbors. This, however, leads to a non-cooperative network. Hence, the nodes need a way to preserve their own estimates, without neglecting those from the rest of the network. This can be implemented much like what was done in the adaptive diffusion scheme (\ref{E:AdaptiveDiffusion}) \cite{Cassio08d}, in terms of a local independent estimate $\psi_n$ and
an estimate $\phi_n$ fused from neighboring \add{supervisor} estimates
\begin{subequations}\label{eq:d-universal-AN}
\begin{eqnarray}
    \hspace{-0.7cm}\phi_{n,i-1} &=& \sum_{\ell \in \overline{\calN}_n} \overline{c}_{n\ell} w_{\ell,i-1},	\label{eq:d-universal-AN1}\\
	w_{n,i} &=& \lambda_n(i) \, \psi_{n,i-1} + [1 - \lambda_n(i)] \phi_{n,i-1},\label{eq:d-universal-AN2}\\
	\psi_{n,i} &=& \psi_{n,i-1} + H_{n,i} u^T_{n,i}(d_n(i)-u_{n,i}\psi_{n,i-1}).
\end{eqnarray}
\end{subequations}
Two crucial differences in (\ref{eq:d-universal-AN}) from the adaptive diffusion (\ref{E:AdaptiveDiffusion}) stand out: (a) the local estimate $\psi_{n,i}$ evolves independently according to the local learning rule, i.e., $w_{n,i}$ \textit{is not injected} into the local AF \cite{Bes12n}, \cite{Cassio2014}; and (b) the local supervisor estimate $w_{n,i-1}$ is shared within its neighborhood, instead of sharing $\psi_{n,i-1}$. Intuitively, $w_{n,i-1}$ tends to be a better estimate than $\psi_{n,i-1}$. Furthermore, in \cite{Cassio2014}  a network learning model was developed that shows how sharing $w_{n,i-1}$ implements a network-level feedback that, hop by hop, allows the best estimates to reach the whole network, while sharing $\psi_{n,i-1}$ not necessarily does. Finally, (\ref{eq:d-universal-AN}) unveils the need for one supervisor per node that implements cooperation, rather than the local adaptive filters: cooperation is carried out indirectly.

The last resource is an accelerating mechanism
that feeds back the supervisor output into the local AF, every $L_n$ iterations \cite{Chamon12c}; it was
already successfully applied in \cite{Cassio2014}:
\begin{eqnarray}\label{eq:Cyclic-Feedback}
\psi_{n,a} &=& \delta_{L_n}(i) \, w_{n,i} + \left[ 1 - \delta_{L_n}(i) \right] \psi_{n,i-1},
	\\
\psi_{n,i} &=& \psi_{n,a} + H_{n,i} u^{T}_{n,i} [ d_n(i) - u_{n,i} \psi_{n,a}]\text{,}
\end{eqnarray}
where $\delta_{L_n}(i)=\delta(i - r L_n)$ is the Kronecker delta, with $r\in\mathbb{Z}^+$. Note that an adaptive diffusion iteration, as in (\ref{E:AdaptiveDiffusion}), is periodically implemented above: within an $L_n$-length cycle, the local AF experiences $L_n-1$ independent iterations; at the $L_n$-th iteration, it is perturbed with the supervisor estimate $w_{n,i}$ exactly once, so that the local AF has access to the potentially best estimate in its neighborhood. As such, $L_n$ should not be small, since otherwise it violates the principle of preserving the local AF. On the other hand, too large values for $L_n$ do not accelerate the transient, returning to purely isolated local estimates in the limit $L_n\to\infty$ \add{(i.e., transfers never occur)}. A simple design technique for $L_n$ is imported from \cite{Chamon12c}, and a typical value for AN applications is $L_n=1,000$. \add{We note, however, that  a good value for $L_n$ may be larger if the input signals are very correlated or if $M$ is large, such that convergence of the local experts is slow.  Conversely, smaller values of $L_n$ might be useful for small $M$ and uncorrelated signals, a situation in which the local experts will converge quickly. In Section \ref{S:Sims} we show in Example 6 (See Fig. \ref{fig:msdmax}) that the network performance is relatively insensitive within a wide range for
$L_n$.}

We now collect all the equations into a complete \textit{distributed universal adaptive network} with a generic learning rule at the nodes. Upon selecting the local learning rule via matrix $H_{n,i}$, and the reception of the supervisor estimates $\{w_{\ell,i-1}\}$ from the neighborhood, the proposed algorithm \add{implemented at node $n$} is:
\begin{subequations}\label{E:UniversalSupervisor}
\allowdisplaybreaks
\begin{align}
	\lambda_n(i) &= \frac{1}{1 + e^{-a_n(i-1)}}, \label{E:US_eta}\\
	\check{\lambda}_n(i) &= \begin{cases}
	\lambda_n(i),  &\text{ if } -a_+ < a_n(i)< a_+,\\
	0,  &\text{ if }a_n(i) = -a_+,\\
	1, & \text{ if }a_n(i) = a_+.
	\end{cases}\label{E:US_etacheck}
	\\
	\phi_{n,i-1} &= \sum_{\ell \in \overline{\calN}_n} \overline{c}_{n\ell} \, w_{\ell,i-1},\label{E:US_combine}
	\\
	w_{n,i} &= \check{\lambda}_n(i) \, \psi_{n,i-1} + [1 - \check{\lambda}_n(i)] \phi_{n,i-1},\label{E:US_outputFilter}
	\\
	e_n(i) &= d_n(i) - u_{n,i} w_{n,i},	\label{E:US_local_error}
	\\
	p_n(i) &= \nu_n p_n(i-1) \notag\\
  &+ [1-\nu_n] \abs{u_{n,i} (\psi_{n,i-1} - \phi_{n,i-1})}^2,
	\\
	\tilmu_{a,n} &= \mu_{a,n} \,/\, [p_n(i) + \eps_p],\label{E:US_tilmu}
	\\
	a_n(i) &= \bigl[a_n  \notag\\
 &+ \tilmu_{a,n} u_{n,i} ( \psi_{n} - \phi_{n} ) e_n(i)\lambda_n [1 - \lambda_n]\bigr]_{-a_+}^{a_+},\label{E:US_a}
	\\
	\psi_{n,a} &= \delta_{L_n}(i) \, w_{n,i} + \left[ 1 - \delta_{L_n}(i) \right] \psi_{n,i-1},	\label{E:US_FB}
	\\
	\psi_{n,i} &= \psi_{n,a} +  H_{n,i}u^{*}_{n,i} [ d_n(i) - u_{n,i} \psi_{n,a} ]\text{,}\label{E:US_adapt}
\end{align}
\end{subequations}
where in \eqref{E:US_a} $a_n=a_n(i-1)$, $\lambda_n=\lambda_n(i)$, $\psi_{n}=\psi_{n,i-1}$ and $\phi_{n}=\phi_{n,i-1}$.

\add{In the algorithm above, \eqref{E:US_eta} is a convex activation function that represents the supervisor parameter, which is adapted in terms of the auxiliary variable $a_n(i)$ \cite{arenas-garciaCombinationsAdaptiveFilters2016}. Eq. \eqref{E:US_etacheck} implements a truncation operation, either ceiling $\lambda_n(i)$ to $1$, or flooring it to $0$, depending on the limiting parameter $a_+$ (Typically $a_+=4$); this results in a smaller variance for the random variable $\check{\lambda}_n(i)$,  also accelerating convergence \cite{arenas-garciaCombinationsAdaptiveFilters2016}.
The neighborhood supervisor estimates are fused into $\phi_{n,i-1}$ in \eqref{E:US_combine}, which is used to generate the local supervisor output $w_{n,i}$ in \eqref{E:US_outputFilter} for local use, with the associated estimation error $e_n(i)$ in \eqref{E:US_local_error}. The quantity $p_n(i)$ is a normalization factor, with the associated filtering parameter $0\ll\nu_n<1$ and regularization parameter $0<\epsilon_p\ll 1$, that helps improving the convergence of the parameter $a_n(i)$; this also has the effect of considerably limiting the required range for $\mu_{a,n}$ in \eqref {E:US_tilmu}, which typically can be chosen in the interval $(0, 1]$ when normalization is adopted \cite{arenas-garciaCombinationsAdaptiveFilters2016}. Equation \eqref{E:US_a} is the actual update recursion for $a_n(i)$ and it drives $\lambda_n(i)$ in \eqref{E:US_eta}.}

Since (\ref{E:UniversalSupervisor}) above evolved from the adaptive diffusion protocol, the fixed combiners $\{\overline{c}_{n\ell}\geq0\}$ must also be convex over the strict neighborhood $\overline{\calN}_n$, that is $\sum_{\ell} \overline{c}_{n\ell}=1$ for $0 \le n, \ell \le N$, and $c_{n\ell}=0$ for $\ell\neq \overline{\calN}_n$. For that matter, the Uniform, (\ref{eq:Metropolis}) and (\ref{eq:error_cov}) rules may be adopted to design $\{\overline{c}_{n\ell}\}$, mutatis mutandis. The Universal Adaptive Supervisor (\ref{E:UniversalSupervisor}) is referred to as the \textit{U-sup} algorithm. As with the LS-alg (\ref{E:LSAdaptiveSupervisor}), the best available estimate at node n is $w_{n,i}$, thus the error definitions for U-sup must be updated w.r.t $w_{n,i}$.
Besides performance, the computational complexity is of central importance in IoT and sensor network applications, and here we consider the number of multiplications per node per iteration $N_\times$ as the metric for comparison, disregarding the local AF operations (which are essentially the same for all distributed algorithms considered here).\footnote{Nevertheless, they still play a role in the computations for cooperation strategies.} The number of multiplications required for implementing the Universal supervisor algorithm (U-sup) (\ref{E:UniversalSupervisor}), the Mean-Square Deviation combiners algorithm (MSD-alg) (\ref{E:MSDSupervisor}) and the Least-Squares combiners algorithm (LS-alg) (\ref{E:LSAdaptiveSupervisor}), respectively, are
\begin{equation}\label{eq:complexity}
    \begin{aligned}
    N_\times \text{(U-sup)} &= \big(|\calN_n|+3\big)M+7\\
    N_\times \text{(MSD-alg)}&=\frac{\big(|\calN_n|^2+3|\calN_n|+2\big)}{2}M+2|\calN_n|^2\\
    N_\times \text{(LS-alg)} &= 2\big(|\calN_n| + 1\big)M + |\calN_n|^3/3+|\calN_n|^2+|\calN_n|~\text{.}
    \end{aligned}
\end{equation}
The complexity of all algorithms depends, obviously, on the application, which is captured by $M$; but also depends on the network topology, captured by $|\calN_n|$ (which assumes the algorithms explore the entire neighborhood at each node).
For a given application, which means a \add{fixed $M$}, if the network infrastructure is enlarged for performance improvement, the U-sup complexity scales linearly with $|\calN_n|$; the LS-alg \add{has a linear term on $|\calN_n|M$, but which is twice as complex as the corresponding term in U-sup,} and has a cubic term $|\calN_n|^3/3$ that is application independent; and the MSD-alg scales quadratically with $|\calN_n|$, and has another quadratic term $2|\calN_n|^2$ that is also application independent. As a numerical example, consider \textbf{Example 1} in Section VI: a network of $N=15$ nodes, with $M=50$ for the AFs, and average node degree of  $|\calN_n|=6$, returns  457 multiplications for the U-sup algorithm, 814 for the LS-alg, and 1472 for the MSD-alg. For the same example, increasing node degree to $|\calN_n|=10$, the U-sup will require 657 multiplications (44\% increase), 1543 for the LS-alg (90\% increase) and 3500 for the MSD-alg (138\% increase).

\subsection{Universality of the proposed protocol}\label{S:UniversalityProof}

Showing that algorithm (\ref{E:UniversalSupervisor}) achieves the Definitions \ref{D:LocalUniversal}--\ref{D:NonCoopUniversal} of distributed universality is intricate, since \eqref{E:UniversalSupervisor} is a set of stochastic coupled nonlinear recursions; in particular, the recursions for the local supervisors $\lambda_n(i)$ are coupled due to the sharing of information across the network. In this section, we show that any steady-state solution to \eqref{E:UniversalSupervisor} must achieve universal performance according to Definitions \ref{D:LocalUniversal}--\ref{D:NonCoopUniversal}, under three assumptions:
\begin{enumerate}[label=A.\arabic{enumi}]
    \item\label{A:ss} The network is at steady-state\add{, that is, all local filters have converged to their final MSD performance, and the supervisors have also converged to their final values (see Sec. \ref{sec:Analysis} for a discussion about convergence)};
    \item\label{A:msd} The local supervisors \eqref{E:US_etacheck}, \eqref{E:US_outputFilter} choose the best option (in terms of MSD) between $\psi_{n,i}$ and $\phi_{n,i}$ at steady-state;
    \item\label{A:Ln} The local filters are independent, i.e., $L_n\rightarrow\infty$.
\end{enumerate}
Note that Assumption \ref{A:msd} was also used in \cite{Jeronimo06m} to prove that the convex combination scheme is universal.  This assumption is justified since using \cite[eq. (11) and (17)]{Vitor09t} it can be shown that the supervisor weight for the convex combination scheme minimizes the combination MSE if $\mu_a\rightarrow 0$.  Minimization of MSE is equivalent to minimization of the MSD when the regressors $u_{n,i}$ are white \cite{Sayed08a}.  Note also that Assumption \ref{A:msd} is related to local properties of the local supervisors, and is thus not equivalent to assuming network universality. \add{Assumption \ref{A:Ln} is equivalent to requiring that $L_n$ is large enough so that the local filters and supervisors have time to converge before a decision about transfer of coefficients is made.}

In the next section we present a model for the transient behavior of \eqref{E:UniversalSupervisor} in the mean and mean-square senses, but the study of the limiting behavior of the resulting model is still considerably difficult, and is left for a future work.

We now explore the steady-state properties of the proposed AN distributed estimator (\ref{E:UniversalSupervisor}). We show in the next two theorems that the proposed scheme is universal w.r.t. the non-cooperative strategy, and that, if a network reaches steady-state (constant MSDs at each node), then necessarily the supervisor leads to global universality.

\begin{Theorem}[Universality w.r.t{.} the non-cooperative strategy]
	\label{T:NonCoopUniversal}
Under Assumptions \ref{A:ss}--\ref{A:Ln}, the network feedback protocol described in~\eqref{E:UniversalSupervisor} is asymptotically universal w.r.t{.} the non-cooperative strategy~(Def.~\ref{D:NonCoopUniversal}).

\end{Theorem}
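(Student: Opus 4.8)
The plan is to reduce the claim to the classical universality of the convex-combination supervisor, using Assumptions~\ref{A:ss}--\ref{A:Ln} to license each step of the reduction. First I would pin down that $\psi_{n,i-1}$ is genuinely the non-cooperative estimate. Under Assumption~\ref{A:Ln} ($L_n\to\infty$) the Kronecker delta $\delta_{L_n}(i)$ in the feedback step~\eqref{E:US_FB} is identically zero, so $\psi_{n,a}=\psi_{n,i-1}$ and the local recursion~\eqref{E:US_adapt} collapses to the stand-alone rule~\eqref{eq:AF}. Thus $\psi_{n,i-1}$ evolves entirely independently of the network, and by Assumption~\ref{A:ss} its steady-state MSD equals exactly the MSD node $n$ would attain operating in isolation---the non-cooperative benchmark of Def.~\ref{D:NonCoopUniversal}.

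Second, the estimate actually used at node $n$ is $w_{n,i}$, which by~\eqref{E:US_outputFilter} is the convex combination $w_{n,i}=\check{\lambda}_n(i)\,\psi_{n,i-1}+[1-\check{\lambda}_n(i)]\,\phi_{n,i-1}$ of the (now non-cooperative) estimate $\psi_{n,i-1}$ and the fused neighbor estimate $\phi_{n,i-1}$, with the mixing parameter steered by the adaptive recursion~\eqref{E:US_a}. Invoking Assumption~\ref{A:msd}---the local supervisor selects, in the MSD sense, the better of its two inputs at steady state, which is precisely the property established for convex combinations in~\cite{Jeronimo06m}---yields that the steady-state $\MSD$ of $w_{n,i}$ is no larger than that of $\psi_{n,i-1}$. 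Here the truncation~\eqref{E:US_etacheck} is what makes the inequality attainable as an actual \emph{no-loss} guarantee: by clipping $a_n(i)$ to $a_+$, $\check{\lambda}_n(i)$ reaches exactly $1$, so $w_{n,i}$ can reproduce $\psi_{n,i-1}$ rather than merely approach it, as the unclipped sigmoid~\eqref{E:US_eta} (whose range is the open unit interval) would. Since this holds for every $n\in\{1,\dots,N\}$, Def.~\ref{D:NonCoopUniversal} is satisfied network-wide.

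I expect the main obstacle to be conceptual rather than computational. Because $\phi_{n,i-1}$ depends on the neighboring supervisor outputs $\{w_{\ell,i-1}\}$, the recursions are coupled across the network, and one must argue that the local optimality granted by Assumption~\ref{A:msd} is nonetheless sufficient for the global conclusion. The key point to articulate clearly is that universality w.r.t.\ the non-cooperative strategy is a purely \emph{local} comparison---$w_{n,i}$ against $\psi_{n,i-1}$---so the supervisor's ability to fall back to $\check{\lambda}_n=1$ guarantees no loss \emph{irrespective} of the quality of $\phi_{n,i-1}$; the cross-network coupling affects only how much is gained, never whether anything is lost. A secondary care point is the criterion mismatch: the update~\eqref{E:US_a} minimizes the local MSE whereas Def.~\ref{D:NonCoopUniversal} and Assumption~\ref{A:msd} are phrased in MSD, so I would close the argument by appealing to the MSE--MSD equivalence for white regressors noted after the assumptions, aligning the quantity the supervisor optimizes with the figure of merit of the theorem.
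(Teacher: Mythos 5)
Your proposal is correct and follows essentially the same route as the paper's own proof: identify the non-cooperative strategy as the special case $\lambda_n(i)=1$ of the convex combination~\eqref{E:US_outputFilter}, and invoke the supervisor's steady-state optimality (Assumption~\ref{A:msd}) to conclude that $w_{n,i}$ cannot be worse than $\psi_{n,i-1}$, irrespective of the quality of $\phi_{n,i-1}$. Your version is in fact somewhat more careful than the paper's, since you make explicit where Assumption~\ref{A:Ln} is needed (to ensure $\psi_{n,i-1}$ truly coincides with the non-cooperative estimate), why the truncation~\eqref{E:US_etacheck} makes the fallback $\check{\lambda}_n=1$ exactly attainable, and how the MSE--MSD criterion mismatch is resolved.
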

\begin{proof}
From equation~\eqref{E:US_outputFilter}, the output $w_{n,i}$ of node $n$ is a linear combination between the non-cooperative local estimate~$\psi_{n,i-1}$ and the averaged estimates from its neighborhood~$\phi_{n,i-1}$. Notice that the non-cooperative strategy is a particular case of~\eqref{E:UniversalSupervisor} in which~$\lambda_n(i) = 1$, for all $i$. Thus, since the linear combiner~$\lambda_n(i)$ minimizes the local $\MSD_n(i)$, the output of each node  $w_{n,i}$ is guaranteed to be at least as good as its non-cooperative version. If the local estimate $\psi_{n,i-1}$ is better than $\phi_{n,i-1}$, then the supervisor $\lambda_n(i)$ will drive $w_{n,i}$ to at least the local performance; on the other hand, if $\phi_{n,i-1}$ is better, then $\lambda_n(i)$ will guide the local node output $w_{n,i}$ to the average neighborhood performance, which is better than the local non-cooperative by hypothesis. Therefore, node $n$ never loses from cooperating, thus it is universal w.r.t. the non-cooperative case (Definition \ref{D:LocalUniversal}).
\end{proof}
\vspace{0.2cm}

\begin{Theorem}[Asymptotic global universality] \label{thm:Global_universal} The network feedback protocol from (\ref{E:UniversalSupervisor}) is globally universal (Definitions \ref{D:GlobalUniversal} and \ref{D:AsymptoticUniversal}) under Assumptions \ref{A:ss}--\ref{A:msd}.
\end{Theorem}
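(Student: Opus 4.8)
The plan is to prove global universality by showing the stronger fact that, at steady state, \emph{all} nodes share the same MSD. Once this is established, each node's MSD trivially equals that of the best node in the network, so global universality (Def.~\ref{D:GlobalUniversal}) follows, and it is asymptotic (Def.~\ref{D:AsymptoticUniversal}) because the claim is made under the steady-state Assumption~\ref{A:ss}. The entire argument reduces to deriving one scalar inequality that bounds each node's root-mean-square deviation by a convex combination of those of its strict neighbors, and then exploiting this inequality with an extremal argument together with network connectivity. I deliberately bypass the route ``local universality $\Rightarrow$ global universality'' sketched in the prose of Section~\ref{ssec:Defs_Universality}, since the supervisor here only makes $w_{n,i}$ at least as good as the fixed fusion $\phi_{n,i-1}$, not as good as the single best neighbor.

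First I would invoke Assumption~\ref{A:msd}: since the local supervisor $\check\lambda_n$ selects the better of $\psi_{n,i-1}$ and $\phi_{n,i-1}$ in the MSD sense, the steady-state output satisfies $\MSD_n \le \MSD_{\phi_n}$, where $\MSD_n \eqdef E\|w^o-w_{n,i}\|^2$ and $\MSD_{\phi_n}\eqdef E\|w^o-\phi_{n,i-1}\|^2$. Next I would bound $\MSD_{\phi_n}$ using that, by \eqref{E:US_combine}, $\phi_{n,i-1}=\sum_{\ell\in\overline{\calN}_n}\overline{c}_{n\ell}w_{\ell,i-1}$ is a convex combination of the neighboring \emph{supervisor} outputs. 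Writing $\tilw_\ell = w^o - w_{\ell,i-1}$,
\[
\MSD_{\phi_n} = E\Bigl\|\sum_{\ell\in\overline{\calN}_n}\overline{c}_{n\ell}\tilw_\ell\Bigr\|^2 = \sum_{\ell,k}\overline{c}_{n\ell}\overline{c}_{nk}\,E[\tilw_\ell^T\tilw_k].
\]
Applying Cauchy--Schwarz $E[\tilw_\ell^T\tilw_k]\le \sqrt{E\|\tilw_\ell\|^2}\sqrt{E\|\tilw_k\|^2}$ to each cross term and setting $s_n\eqdef\sqrt{\MSD_n}$ collapses the double sum into a perfect square, giving $\MSD_{\phi_n}\le\bigl(\sum_{\ell\in\overline{\calN}_n}\overline{c}_{n\ell}s_\ell\bigr)^2$. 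Chaining with the supervisor bound and taking square roots yields the key inequality
\[
s_n \le \sum_{\ell\in\overline{\calN}_n}\overline{c}_{n\ell}\,s_\ell, \qquad n=1,\dots,N,
\]
so every node's RMS deviation is dominated by a convex combination of its strict neighbors' RMS deviations.

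The remaining step is an extremal argument. Let $p\eqdef\arg\max_n s_n$ with value $s^{\max}\eqdef s_p$. Because the right-hand side above is a convex combination of quantities each at most $s^{\max}$, the inequality at node $p$ can only hold with equality, i.e.\ $\sum_{\ell\in\overline{\calN}_p}\overline{c}_{p\ell}s_\ell=s^{\max}$; since the weights $\overline{c}_{p\ell}$ are strictly positive on the strict neighborhood for the standard convex rules (uniform, Metropolis, relative-variance), this forces $s_\ell=s^{\max}$ for every $\ell\in\overline{\calN}_p$. Hence every strict neighbor of a worst node is itself a worst node; iterating along the paths guaranteed by connectedness of the (undirected) network propagates $s_n=s^{\max}$ to all $n$. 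All nodes then share the same steady-state MSD, each node is as good as the best node, and the protocol is asymptotically globally universal.

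I expect the main obstacle to lie in the second step rather than the extremal argument. The Cauchy--Schwarz bound must control the unknown cross-correlations $E[\tilw_\ell^T\tilw_k]$ between neighboring deviations \emph{without} any independence assumption, and one must verify that the quantities actually fused at steady state are the supervisor outputs $w_{\ell,i-1}$ rather than the raw local estimates $\psi_{\ell,i-1}$, so that $s_\ell$ genuinely measures node $\ell$'s delivered performance. A secondary technical point is that the propagation in the extremal step presupposes positive combination weights along every edge, i.e.\ that the combination graph inherits the connectivity of the communication graph; this holds for the convex rules adopted here but should be recorded as a standing hypothesis.
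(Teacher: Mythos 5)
Your proof is correct and follows essentially the same route as the paper's: both reduce global universality to showing that all steady-state MSDs must be equal, bound the fused estimate's MSD by a convex combination of the neighbors' MSDs (the paper via Jensen's inequality on $\|\cdot\|^2$, you via Cauchy--Schwarz on the cross-correlations, which yields an equivalent and in fact slightly tighter bound in terms of $\sqrt{\MSD_\ell}$), and then run an extremal argument at the worst-performing node using Assumption~\ref{A:msd}. Your explicit propagation along paths of the connected graph, with the recorded hypothesis of strictly positive combination weights on every edge, is a slightly more careful rendering of the paper's assertion that the worst node must have a strictly better strict neighbor, a step that itself implicitly relies on the same connectivity and positivity conditions.
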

\begin{proof}
Suppose that the estimates $w_{n,i}$ computed by each node resulted in different values of local $\MSD_n(i)=\E\|w^o-w_{n,i}\|^2$ at steady state.  We show next that this leads to a contradiction.  Assume then that the network is at steady-state and node $n_0$ has the worst performance of all nodes, that is, $\MSD_{n_0}(i) \ge \MSD_{n}(i)$ for all $n\neq n_0$ and $\MSD_{n_0}(i) > \MSD_{\ell}(i)$ for at least one $\ell \in \overline{\calN}_{n_0}$ (such a node has to exist, since the number of nodes is finite and we are assuming that not all local MSDs are equal). Therefore, we have, at steady-state,
\begin{equation}
    \begin{aligned}
        \MSD_{n_0}(i)&=\E\|w_{n_0,i}-w^o\|^2 \ge \E\|w_{n,i} - w^o\|^2\\
        &=\qquad\MSD_n(i) \text{ for } n \in \overline{\calN}_{n_0},
    \end{aligned}
\end{equation}
with $\MSD_{n_0}(i) > \MSD_{\ell}(i)$ for at least one $\ell\in \overline{\calN}_{n_0}$.
Now, at steady-state $\MSD_n(i) = \MSD_n(i-1)$, and  from \eqref{E:US_combine}
\begin{equation}\begin{aligned}
&\E\|\phi_{n_0,i-1}-w^o\|^2=\E\biggl\|\sum_{\ell\in\overline{\calN}_{n_0}} \overline{c}_{n_0\ell}w_{\ell,i-1}-w^o \biggr\|^2 \\
&\le \E \sum_{\ell\in\overline{\calN}_{n_0}}\overline{c}_{n_0\ell}\|w_{\ell,i-1}-w^o\|^2 = \sum_{\ell\in\overline{\calN}_{n_0}}\overline{c}_{n_0\ell} \MSD_\ell(i-1),
\end{aligned}\label{U:thm.inequality}\end{equation}
where we used the fact that $\|\cdot\|^2$ is a convex function and the $\{\overline{c}_{nn_0}\}$ add up to one.  Since by hypothesis $n_0$ is the worst node, and at least one node in its neighborhood has a smaller MSD, we conclude that necessarily
\begin{equation}
    \E\|\phi_{n_0,i-1}-w^o\|^2 \le \sum_{\ell\in\overline{\calN}_{n_0}}\overline{c}_{n_0\ell} \MSD_\ell(i-1) < \MSD_{n_0}(i-1).\label{U:MSD.inequality}
\end{equation}
The last inequality results from the hypotheses that $\sum_{\ell\in \overline{\calN}_{n_0}}\bar{c}_{n_0\ell} = 1$, that $\MSD_n\le\MSD_{n_0}$, and that there is $\ell_0\in\overline{\calN}_{n_0}$ such that $\MSD_{\ell_0}<\MSD_{n_0}$.
This means that the supervisor for node $n_0$ should change its choice to reduce the MSD, contradicting Assumptions \ref{A:ss}-\ref{A:msd}. We conclude therefore that at steady-state all nodes must have the same performance.
\end{proof}

\section{Performance Analysis}
	\label{sec:Analysis}

In this section we propose a model for the mean and mean-square performance of the new \add{universal} diffusion strategy. Let us start by introducing some additional assumptions. We extend our data model (\ref{eq:data_model}), now allowing for changes in the vector of unknown parameters:
\begin{equation}\label{eq:data-model-tv}
	d_n(i) = u_{n,i} w^o_{i-1} + v_n(i)
		\text{,}
\end{equation}
where  $w^o_{i-1} \in \mathbb{R}^{M \times 1}$ is a \add{time-varying} vector of unknown parameters the network is trying to estimate, $v_n(i)$ is an i.i.d{.} zero-mean measurement noise with variance $\sigma_{v,n}^2$, independent of all regressor vectors $\{u_{\ell,i}\}$ in the network. The initial condition $w^o_{-1}$ is a random unit norm vector. We further assume that
\begin{enumerate}[resume*]
\item\label{A:indep} $\{ u_{n,i} \}$ is a zero-mean i.i.d{.} sequence with covariance matrix $R_n$;
\item\label{A:Ln2} $L_n\equiv L$ is the same for all nodes in the network;
\item\label{A:random.walk} The optimum parameter vector $w^o$ may change according to a random walk model
\begin{equation}\label{eq:RandomWalk}
    w^o_i = w^o_{i-1} + q_i,
\end{equation}

where $\{q_i\}$ is an i.i.d. vector sequence with zero mean and autocovariance matrix $Q = \E q_i q_i^T$;
\item\label{A:smallstepsize} \add{The stepsizes $\mu_n$  and $\mu_{a,n}$ are small enough for the usual slow adaptation approximations in adaptive filtering to be valid, and such that the variance of $a_n(i)$ can be disregarded   \cite{Sayed08a,Diniz13a,Nascimento2014,arenas-garciaCombinationsAdaptiveFilters2016};}
\item\label{A:nu} \add{The forgetting factors $\nu_n$ are close to one, so that the variance of $p_n(i)$ can be disregarded;}
\item\label{A:etacheck} \add{$\check{\lambda}_n(i)=\lambda_n(i)$ always in \eqref{E:US_etacheck}.}
\end{enumerate}
The discussion below assumes, for simplicity, that the local adaptive filters in all nodes are using the same  algorithm, either LMS or NLMS. It would not be difficult to modify the models and arguments for other types of filters, or even for networks running different classes of filters at each node. Assumptions \ref{A:indep},  \ref{A:random.walk}\add{, \ref{A:smallstepsize} and \ref{A:nu} }are widely used in the literature \cite{Sayed08a,Diniz13a,arenas-garciaCombinationsAdaptiveFilters2016}.  Assumption \ref{A:Ln2} is used only to simplify the analysis and can be easily relaxed.  \add{Assumption \ref{A:etacheck} is used to simplify the model and will tend to increase the model variances.}

\subsection{The global adaptive network model}

We proceed by defining the local error vector quantities for each node as
\begin{align}\label{eq:Psitl}
	\Psitl_{n,i} &= w^o_i - \psi_{n,i} \text{,} &
	\tilw_{n,i} &= w^o_i-w_{n,i}
		\text{.}
\end{align}
Next, collect the local quantities defined in Algorithm (\ref{E:UniversalSupervisor}) into global variables:
\begin{align*}
	\Psitl_i &= \col \left( \Psitl_{n,i} \right) \text{,} &
	\tilw_i &= \col \left( \tilw_{n,i} \right) \text{,} &
	v_i &= \col \left( v_n(i) \right) \text{,}
	\\
	e_i &= \col \left( e_n(i) \right) \text{,} &
	U_i &= \diag \left( u_{n,i} \right) \text{,} &
	\calM_i &= \diag \left( H_{n,i} \right) \text{,}
	\\
	a_i &= \col \left( a_n(i) \right) \text{,} &
	G &= C \kron I_M \text{,} &
	\calM_a &= \diag \left( \mu_{a,n} \right) \text{,}
	\\
	p_i &= \col \left( p_{n}(i) \right) \text{,} &
	\bar{\nu} &= \diag \left( \nu_n \right)  & \xi_i&=\colone_N\kron q_i.
\end{align*}
A set of equations describing the evolution of the entire network can then be obtained as follows.

Let $c_n^T$ be the $n$-th row of the combining matrix $C$ and $G_n = c_n^T \kron I_M$~(the $n$-th block-row of $G$). \add{Under \ref{A:etacheck}}, the equations for the overall network can be written in terms of the error vectors as
\begin{align}
	\Lambda_i &= \diag\left( \frac{1}{1 + e^{-a_n(i-1)}} \right) \text{,} \quad \calL_i = \Lambda_i \kron I_M \text{,}
		\label{eq:Lambdai}
	\\
	\tilw_i &= \calL_i \Psitl + \left( I_{MN} - \calL_i \right) G \tilw + \xi_i\text{,}
		\label{eq:wtli}
	\\
	\Psitl_{i} &= \begin{cases} 	\tilw, \qquad\qquad\qquad\text{if } \delta_{L_n}(i)=1,\\
	\left( I_{MN}-U_i^{T} \calM_i U_i \right) \Psitl - U_i^{T} \calM_i v_i + \xi_i,\  \text{otherwise}
.\end{cases}\raisetag{24pt}
		\label{eq:Psitli}
	\\
	e_i &= U_i \tilw_i + v_i - U_i\xi_i\text{,}
		\label{eq:ei}
	\\
	p_i &= \bar{\nu} p_{i-1} + \left(
			I_N - \bar{\nu} \right) \col\left( \abs{u_{n,i} \left( G_n \tilw - \Psitl_{n} \right)}^2
			\right) \text{,}\raisetag{17pt}
		\label{eq:Pi}
	\\
	\calM_{a,i} &= \calM_{a} \diag\left( \frac{1}{p_n(i) + \epsilon_p} \right) \text{,}
		\label{eq:mubarai}
	\\
	a_i = &\Big[ a + {\cal M}_{a,i} \Lambda_i \left(I - \Lambda_i \right) 
	\diag\left( u_{n,i} \left( G_n \tilw - \Psitl_{n} \right) \right)^{T} e_i		\Big],
		\label{eq:Ai}
\end{align}
where $\Psitl=\Psitl_{i-1}$, $\Psitl_n=\Psitl_{n,i-1}$ and $\tilw=\tilw_{i-1}$: some iteration indexes will be omitted in the sequel whenever necessary, and the brackets $[a_i]=[a_i]^{a_+}_{-a_+}$ in \eqref{eq:Ai} constrain each entry of vector $a_i$ to the interval $[-a_{+}, a_{+}]$. To obtain \eqref{eq:wtli}, we used the facts that $q_i$ is the same across the network, and that $\sum_{\ell\in\overline{\calN}_n} c_{n\ell} = 1$.

\subsection{Analysis in the mean}\label{ssec:M_Analysis}

Assume that the variance of $a_n(i-1)$ is small enough so that
\begin{equation}
	\Lambda_i \approx \Lambdabar_i = \diag\left( \lambdabar_n(i) \right)
		\eqdef \diag\left( \frac{1}{1+e^{-\E a_n(i-1)}} \right)
		\text{.}
\end{equation}
This assumption is reasonable if $\mu_a$ is small \add{(Assumption \ref{A:smallstepsize})}, or when the $\{a_n(i-1)\}$ are close to their limits at $\pm a_{+}$~\cite{Vitor09t}. \add{Under Assumption \ref{A:nu}}, we can approximate $p_i \approx \E \{p_i\}$, and
the evolution of $\abar_i \eqdef \E\{a_i\}$ is given by~(to simplify the following argument we introduce the auxiliary variable $\check{a}_i$ as below)
\begin{equation}\label{eq:Abar}
\begin{split}
	\check{a}_i &= \abar + \calMbar_{a} \Lambdabar \left(I - \Lambdabar\right) \E \left\{
		\diag \left( u_{n,i} \left( G_n \tilw - \Psitl_{n} \right)\right)^{T} e_i
	\right\} \text{,}
	\\
	\abar_i &= \left[ \check{a}_i \right]_{-a_{+}}^{a_{+}}
		\text{,}
\end{split}\raisetag{10pt}
\end{equation}
where $\Lambdabar=\Lambdabar_i$ and $\calMbar_{a}=\calMbar_{a,i} \eqdef \E \{ \calM_{a,i} \}$ will be evaluated further on.

Replacing
\begin{equation*}
	e_i = U_i \left( \calL_i \Psitl_{i-1} + \left(I_{MN} - \calL_i\right) G \tilw_{i-1} \right) + v_i - U_i \xi_i
\end{equation*}
into~\eqref{eq:Abar}, and recalling that $v_i$ and $\xi_i$ are independent of all other variables, we obtain
\begin{equation}\label{eq:Abarb}
\begin{aligned}
	\check{a}_i &= \abar + \calMbar_{a} \Lambdabar \left(I - \Lambdabar\right)\E \left\{
	\col \left[
		\left( G_n \tilw - \Psitl_{n} \right)^{T}\right.\right. \\
		{}&\left.\left.\times u_{n,i}^{T} u_{n,i}
		\left( \lambdabar_n(i) \Psitl_{n,i-1} + \left(1 - \lambdabar_n(i)\right) G_n \tilw \right)	\right]	\right\}
	\\
	{}&= \abar + \calMbar_{a} \Lambdabar \left(I - \Lambdabar\right) \col\left[
		\Tr \left( R_n \left( \lambdabar_n(i) \E\{ \Psitl_{n} \tilw^{T} \} G_n^T
		\right.\right.\right.
		\\
		{}&+ \left.\left.\left.
		\left(1 - \lambdabar_n(i)\right) G_n \E\{ \tilw \tilw^{T} \} G_n^T
		\right.\right.\right.- \left.\left.\left.
		\lambdabar_n(i) \E\{ \Psitl_{n} \Psitl_{n}^{T} \}
		\right.\right.\right.
		\\
		{}&- \left.\left.\left.
		\left(1 - \lambdabar_n(i)\right) G_n \E\{ \tilw\Psitl_{n}^{T} \}
	\right) \right) \right]
	\text{.}
\end{aligned}
\end{equation}
We see that for the evaluation of $\abar_i$ we must find
\begin{align}
	T_i &\eqdef \E\{\tilw_i \tilw_i^{T}\} \text{,} &
	S_i &\eqdef \E\{\tilw_i \Psitl_i^{T}\} \text{,} &
	K_i &\eqdef \E\{\Psitl_i \Psitl_i^{T}\}
		\text{,}
\end{align}
from which all expected values in \eqref{eq:Abar} can be obtained directly as follows. Note that
$\Psitl_{n,i-1} = \left(b_{(n)}^T \kron I_M\right) \Psitl_{i-1}$, where $b_{(n)}$ is the $n$-th $N\times 1$ canonical basis vector.  Denoting $B_n = b_{(n)}^T \kron I_M$, we can rewrite \eqref{eq:Abar} as
\begin{equation}\label{eq:Abari}
\begin{split}
	\abar_i &= \abar + \calMbar_a \Lambdabar \left(I - \Lambdabar\right) \col\left[
		\Tr \left( R_n \left( \lambdabar_n B_n S^{T} G_n^T\right.\right.\right. +\\
		{}&\left(1-\lambdabar_n\right) G_n T G_n^T-
		\lambdabar_n B_n K B_n^T-
		\left.\left.\left.\left(1-\lambdabar_n\right) G_n S B_n^T
	\right) \right) \right]
		\text{,}
\end{split}\raisetag{27pt}
\end{equation}
with $\lambdabar_n=\lambdabar_n(i)$, $K=K_{i-1}$, $S=S_
{i-1}$ and $T=T_{i-1}$.
Note that a recursion for $\calMbar_{a,i}$ can be also obtained from $T_{i-1}$, $S_{i-1}$ and $K_{i-1}$ as
\begin{equation}\label{eq:Emubarai}
	\calMbar_{a,i} = \E\{ \calM_{a,i} \} \approx \calM_a \diag \left( \frac{1}{\E\{p_n(i)\}+\epsilon_p} \right)
		\text{,}
\end{equation}
with $\pbar_n(i) \eqdef \E \{p_n(i)\}$ and
\begin{equation}\label{eq:Epki}
\begin{split}
	\pbar_n(i) & = \nu_n \pbar_n(i-1)
+ (1 - \nu_n) \E \left\{ \abs{u_{n,i} \left( G_n \tilw - \Psitl_{n} \right)}^2 \right\}
	\\
	{}&= \nu_n \pbar_n(i-1) + (1 - \nu_n) \Tr \left\{
		R_n \left( G_n T_{i-1} G_n^{T}
		\right.\right.
		\\
		{}&- \left.\left.
		G_n S_{i-1} B_n^T - B_n S_{i-1}^{T} G_n^{T} + B_n K_{i-1} B_n^T \right)
	\right\}
		\text{.}
\end{split}\raisetag{12pt}
\end{equation}

The covariance matrices $T_i$, $S_i$ and $K_i$ can be obtained from the autocorrelation matrix of the vector
$\Theta_i = \col\left( \tilw_i, \Psitl_i \right)$, \add{and will be dealt with in Section \ref{ssec:MS-Analysis}. A recursion for $\Theta_i$ can be obtained from \eqref{eq:Lambdai}--\eqref{eq:Ai} as follows.}
\begin{equation}\label{eq:Thetai}
	\Theta_i =
		\begin{bmatrix}
			(I_{MN}-\calL_i)G  &  \calL_i \\
			0              &  I - U_i^{*} \calM_i U_i
		\end{bmatrix} \Theta
		+ \begin{bmatrix}
			\xi_i \\
			\xi_i-U_i^{T} \calM_i v_i
		\end{bmatrix}
		\text{,}
\end{equation}
with $\Theta=\Theta_{i-1}$. Using again the assumption that the variance of $a_n(i)$ is small \add{(\ref{A:smallstepsize})}, the mean of \eqref{eq:Thetai} becomes
\begin{equation}\label{eq:Thetabari}
	\Thetabar_i \eqdef \E\{ \Theta_i \} =
	\underbrace{
		\begin{bmatrix}
			(I_{MN} - \bar{\calL}_i) G & \bar{\calL}_i \\
			0                    & I - R_{\mu}
		\end{bmatrix}}_{\eqdef F_i} \Thetabar_{i-1}
		\text{,}
\end{equation}
where $\bar{\calL}_i = \Lambdabar_i \kron I_M$, and $R_{\mu}$ depends on the particular algorithm used at each node. Assuming all nodes use LMS, we have $R_\mu = \diag\left(\mu_n R_n\right)$.  If the nodes use NLMS and the regressors $u_{n,i}$ are tap-delay lines, we can use the approximation $R_\mu \approx \diag\left(\frac{\mu_n}{M\sigma_{u,n}^2}R_n\right)$, where $\sigma_{u,n}^2=E\{u_n^2(i)\}$ \cite{tarrabConvergencePerformanceAnalysis1988}.  Of course, other algorithms can also be used with their appropriate models, and the algorithms need not be equal for all nodes.

We next study the stability and convergence of \eqref{eq:Thetabari}\footnote{This recursion is \emph{not} linear, since $\bar{\cal L}_i$ depends on the autocorrelation (not the autocovariance) of $\Theta_i$, and therefore depends also on its mean, $\Thetabar_i$.}.
Note first that \add{without transfer of coefficients (i.e., for $L\rightarrow\infty$)} the recursion for $\E\{\Psitl_i\}$ is linear and uncoupled from that for $\E\{\tilw_i\}$, so
$\E\{\Psitl_i\}$ converges to zero if and only if $\rho(I - R_{\mu}) < 1$, where $\rho(\cdot)$ denotes the spectral radius, or equivalently if $0 < \mu_n R_n < 2I$~\cite{Sayed08a}. The stepsizes must then satisfy
\begin{align}\label{eq:LMS.mean.stability}
	0 &< \mu_n < \begin{cases}
	2/\lambda_{\max}(R_n), &
		\text{for LMS, or}\\
		2, &\text{for NLMS}.\end{cases}
\end{align}
where $\lambda_{\max}(R_n)$ represents the largest eigenvalue of $R_n$.  Therefore, for small enough stepsizes we can guarantee that $\E\{\Psitl_i\} \rightarrow 0$. This should be no surprise, since the adaptive filters in each node are running independently of each other.

To prove stability and convergence \add{in the mean} of the entire scheme, \add{note that $F_i$ in \eqref{eq:Thetabari} is block-diagonal, and we just saw that under \eqref{eq:LMS.mean.stability} the lower diagonal block corresponds to a stable recursion.  We therefore must now show that the spectral radius of the upper diagonal block, i.e., $(I_{MN}-\bar{\cal L}_i)G$ is less than one.  This can be accomplished by showing that there is an induced norm such that $\norm{(I_{MN}-\bar{\cal L}_i)G}<1$ (since any induced norm upper bounds the spectral radius of a matrix \cite{Horn13}).}

\add{For that} we use the block-maximum norm~\cite{Sayed14a}. The block-maximum norm of a length-$MN$ vector is defined as follows: partition a length-$MN$ vector into length-$M$ blocks as $x = \col
(x_1, \dots, x_{N}) \in \mathbb{R}^{MN}$. Then the block maximum norm is defined as
\begin{equation}\label{eq:vector.b.norm}
	\norm{x}_{b,\infty} \eqdef \max_{1 \le n \le N} \norm{x_n}
		\text{,}
\end{equation}
where \add{from now on }$\norm{\cdot}$ \add{denotes} the Euclidean norm. For block matrices an induced norm based on \eqref{eq:vector.b.norm} is defined the usual way: let $A \in \mathbb{R}^{MN \times MN}$ be also partitioned into $M\times M$ blocks $A_{n,\ell}$ and define \cite{Sayed14a}
\begin{equation}\label{eq:matrix.b.norm}
	\norm{A}_{b,\infty} \eqdef \max_{\norm{x}_{b,\infty} \le 1} \norm{A x}_{b,\infty}
		\text{.}
\end{equation}

Let us now evaluate \add{$\norm{(I_{MN}-\bar{\cal L}_i)G}_{b,\infty}$}:
From the definitions of $\bar{\calL}_i = \Lambdabar_i \kron I_M$ and $G = C \kron I_M$\add{ and for any vector $x\in\re^{MN}$ with $\norm{x}_{b,\infty}\le 1$, we have}
\begin{align}\label{eq:block.Fix}
	(I_{MN} - \bar{\calL}_i) G \begin{bmatrix} x_1 \\ \vdots \\ x_N \end{bmatrix}
 &= \left[ C \kron I_M - (\Lambdabar_i C) \kron I_M \right]
		\begin{bmatrix} x_1 \\ \vdots \\ x_N \end{bmatrix}
\notag\\
	{}&= \begin{bmatrix}
		\sum_{\ell =1}^N (1 - \lambdabar_{1}(i)) c_{1,\ell} x_{\ell}
		\\
		\vdots
		\\
		\sum_{\ell =1}^N (1 - \lambdabar_{N}(i)) c_{N,\ell} x_{\ell}
	\end{bmatrix}
	\notag\\
	{}& =
 \col \left\{
		(1 - \lambdabar_{n}(i)) \sum_{\ell =1}^N c_{n,\ell} x_{\ell}
	\right\}
		\text{.}\raisetag{40pt}
\end{align}
Note that each one of the blocks in \eqref{eq:block.Fix} satisfies
\begin{align*}
	&\left\|
		(1 - \lambdabar_{n}(i)) \sum_{\ell =1}^N c_{n,\ell} x_{\ell}
	\right\|
	\le \abs{1 - \lambdabar_n(i)} \sum_{\ell=1}^N c_{n,\ell} \norm{x_{\ell}}
	\\
	&\qquad {}\le (1 - \lambdabar_n(i)) \sum_{\ell=1}^Nc_{n,\ell} 
		= 1 - 
  \lambdabar_n(i)
		\text{,}
\end{align*}
where we used the facts that
(a)~$\norm{x}_{b,\infty} \le 1 \Rightarrow \norm{x_{\ell}} \le 1$, $1 \le \ell \le N$;
(b)~$0 < \lambdabar_n(i) < 1$, so $\abs{1 - \lambdabar_n(i)} = 1 - \lambdabar_n(i)$;
(c)~$\sum_{\ell=1}^N c_{n,\ell} = 1$.

Recalling that the $a_n(i)$ are restricted to the interval $[-a_{+}, a_{+}]$, the $\lambdabar_n(i)$ will stay in the interval
\begin{equation*}
	\lambdabar_n(i) \in \left[ \frac{1}{1+e^{a_{+}}}, \frac{1}{1+e^{-a_{+}}} \right]
		\text{,}
\end{equation*}
and 
\begin{equation*}
	0<1 - 
 \lambdabar_n(i) \le 1 -
 \frac{1}{1+e^{a_{+}}} \eqdef \eta < 1
		\text{.}
\end{equation*}
We conclude that \add{$\norm{(I-\bar{\cal L}_i)Gx}_{b,\infty} \le \eta < 1$ for all $x\in \re^{MN}$ with $\norm{x}_{b,\infty}\le 1$.}  This means that \add{$\rho((I-\bar{\cal L}_i)G) \le 1-\eta < 1$, and thus } $\Thetabar_i$ converges exponentially fast to the origin, that is, the proposed scheme converges in the mean whenever the step-sizes $\mu_n$ are chosen so that all node filters are stable.

\subsection{Analysis in the mean square}\label{ssec:MS-Analysis}

Multiplying \eqref{eq:Thetai} by its transpose and taking expectations, we obtain recursions for $T_i$, $S_i$ and $K_i$.  The recursion for $K_i$ is a standard result for LMS or NLMS filters, since the local filters are operating independently of each other.  We restrict ourselves to the cases of LMS and NLMS only to keep the argument more concise --- if some nodes use different algorithms (such as RLS), the corresponding models from the literature can be substituted \cite{Sayed08a,Diniz13a}. Defining $\calQ=\E\xi_i\xi_i^T=Q\kron\colone\colone^T$, for LMS we have
\begin{equation}\label{eq:Ki.LMS}
	K_i = \begin{cases} \begin{aligned} &K - R_{\mu} K - K R_{\mu} + \diag\left(\trace(\mu_n R_n K)R_n\right) \\
	&+ 2R_\mu K R_\mu+ \diag\left(\mu_n^2 \sigma_{v,n}^2 R_n\right) + \calQ, \text{ if }\delta_L(i)=1
	\end{aligned} \\
	T_{i-1}, \qquad\qquad \text{if } \delta_L(i)=0.\end{cases}\raisetag{8pt}
\end{equation}
whereas an approximate model for NLMS is the recursion  \cite{tarrabConvergencePerformanceAnalysis1988}
\begin{equation}\label{eq:Ki.NLMS}
	K_i = \begin{cases} 
	K ~-~ R_{\mu} K ~-~ K R_{\mu}~ +~ 
	R_\mu K R_\mu & \\
	+ ~\diag\left(\frac{\mu_n^2}{M(M-2)\sigma_{u,n}^4} \sigma_{v,n}^2 R_n\right) + \calQ,
	& \text{ if }\delta_L(i)=1\\
	T_{i-1}, \qquad\qquad \text{ if }\delta_L(i)=0\end{cases}
\end{equation}
recalling that here we denote $K=K_{i-1}.$

For $T_i$ and $S_i$, we obtain
\begin{align}
	T_i &= \left(I_{MN} - \bar{\calL}\right) G T G^T \left(I_{MN} - \bar{\calL}\right)
		+ \bar{\calL} S^{T} G^T \left(I_{MN} - \bar{\calL}\right) \notag\\
		&+ \left(I_{MN} - \bar{\calL}\right) G S \bar{\calL}
	+ \bar{\calL} K \bar{\calL} + \calQ
			\text{,}
	\label{eq:Ti}\\
	S_i &= \begin{cases} \begin{aligned}
	    &\left(I_{MN} - \bar{\calL}\right) G S \left(I_{MN} - R_{\mu}\right)\\
		&+ \bar{\calL} K \left(I_{MN} - R_{\mu}\right) + \calQ
		\end{aligned} \qquad \qquad\text{ if }\delta_L(i)=1\\
		T_{i-1} \qquad\qquad \text{ if }\delta_L(i)=0.
	\end{cases}\label{eq:Si}
\end{align}
where $\bar{\calL}=\bar{\calL}_i$.

A model for the overall algorithm is obtained running \eqref{eq:Ki.LMS}--\eqref{eq:Si} and \eqref{eq:Abari}--\eqref{eq:Epki} sequentially. Given the highly nonlinear nature of the problem, we leave a stability analysis of the recursion for a future work.

\section{Simulations}\label{S:Sims}

In this section we study the three adaptive combiners described earlier: the MSD-alg algorithm, given by \eqref{E:MSDSupervisor}; the LS-alg algorithm, defined in \eqref{E:LSAdaptiveSupervisor}; and the proposed U-sup algorithm, collected in \eqref{E:UniversalSupervisor}.

For ease of reference, we repeat some definitions here, and introduce others. The input signals $\{u_n(i)\}$ are zero-mean Gaussian sequences generated according to
\begin{equation}\label{eq:Corr_model}
    u_n(i) = \beta_n u_n(i-1) +\sqrt{1-\beta_n^2} x_n(i)\text{,}
\end{equation}
in which $x_n(i)$ is a Gaussian i.i.d. zero-mean signal with unit variance $\sigma^2_{x,n}=1$, and $-1<\beta_n<1$ is the correlation factor. The measured signal $d_n(i)$ is generated according to the data model (\ref{eq:data-model-tv}). The measurement noise $v_n(i)$ is a Gaussian i.i.d. sequence whose variance $\sigma^2_{v,n}$ is adjusted at the nodes to achieve the SNR profiles, randomly selected, that are presented in each simulation example.

Both stationary and non-stationary scenarios are considered, so that the $M\times 1$ time-varying unknown plant $w^o_i$ follows the random walk model defined in (\ref{eq:RandomWalk}),
with initial condition $w^o_{-1}=\frac{1}{\sqrt{M}}\colone$, and $q_i$ is a zero-mean, i.i.d. Gaussian vector process, with covariance matrix $Q=\sigma^2_q I_M$.  For stationary plants, $\sigma_q^2=0$ and $w^o_i=w^o$. The adaptive network in charge of tracking such a plant is comprised of $N=15$ nodes equipped with local NLMS filters also with order $M=50$, and step-sizes randomly selected as either $\mu_{n}=0.1$, or $\mu_{n}=0.01$ (except for Example 5), with regularization $\epsilon=10^{-6}$.

We design scenarios so as to explore the desired properties for ANs discussed in Section \ref{ssec:Defs_Universality}, and to conclude on the  universality of the algorithms. The adopted metrics are the local $\MSD_n(i)$ and network $\MSD(i)$ mean-square deviations (check (\ref{eq:Local_Errors}) and (\ref{eq:Network_Errors})), defined in terms of $\psi_{n,i-1}$ for the non-cooperative case, in terms of $\phi_{n,i-1}$ for the MSD-alg case, and in terms of $w_{n,i}$ for the U-sup and LS-alg. We only present a few U-sup combiners $\{\text{E}~\lambda_n(i)\}$, in order to promote picture clarity; information on combiners for the other algorithms may be obtained directly in \cite{Takahashi10d} and \cite{Bes2017}.

There are three different error figures: the transient curves, represented by $\MSD(i)$ as a function of the iterations; the steady-state curves, given by $\MSD_n(\infty)$ versus the node index; and a robustness curve, which depicts global MSD quantities versus the non-stationarity parameter $\sigma_q^2$, obtained as follows. The minimum local MSD across the network, $\min_n \MSD_n(\infty)$, is considered for the non-cooperative case; the distributed algorithms are represented by their maximum local MSD, i.e., $\max_n \MSD_n(\infty)$; in other words, the worst node in each algorithm must be equal or better than the best non-cooperative node.
Such pictures show how sensitive the distributed algorithms are with respect to how rapidly the plant evolves. Notice that universality may be inferred from both the steady-state and the robustness MSD curves.

The algorithms are compared in fair scenarios, with their parameters optimized for the set of studied cases. Namely, U-sup uses
$\mu_{a,n}=\mu_a=0.005$ (in Example 4, $\mu_a=5\cdot 10^{-4}$) and feedback cycle $L_n=L$, with either $L=800$, or \add{no transfer of coefficients ($L\to\infty$)}, $\nu_n = 0.9$ and $\epsilon_p = 0.01$. The LS-alg uses $\eps_{LS}=10^{-8}$ and $\gamma_n=\gamma=0.9999$; in Example 1 we add one LS-alg curve with $\gamma=0.99$ for comparison, as suggested by the authors \cite{Bes2017}.\footnote{We tested $\gamma\in\{0.99,0.999,0.9999\}$ in order to obtain the best LS-alg performance. For all the examples included here, $\gamma=0.9999$ resulted in the best error levels at steady-state, while $\gamma=0.99$ presented better transient, but with a considerable degradation after convergence. Different levels for $\eps_{LS}$ were also considered, although without relevant impact on performance.} The MSD-alg step-size $\mu_M$ is implemented with $\kappa_M=10^{-5}$ (this choice resulted in better performance than the value $\kappa_M=0.8$ used in \cite{Takahashi10d}) and $\eps_M=10^{-3}$ (See (14) and (18) in \cite{Takahashi10d}).


In \textbf{Example 1}, we test the ability of the algorithms in rejecting a low-SNR at a well connected node. Node $1$ has SNR = $-4.9~dB$ and a strict node degree $\bar{\calN}_1=8$ (the most connected). The $N$ inputs $\{u_n(i)\}$ are white, the unknown vector $w^o$ is stationary at first
and the SNR and the stepsizes across the nodes are
SNR=$[-4.9, 11.8, 19.1, 15.7, 16.4, 14.5, 13.8, 15.9, 11.7, 12.1,$ $11.6, 11.1, 18.9, 14.6, 18.1]$ and $\mu_k=0.1\cdot[1, 10, 1, 1, 1,$ $10, 1, 10, 1, 10, 1, 1, 1, 10, 1, 1]$. The network topology and the adaptive combiner mean evolution $\text{E}\lambda_n(i)$ for our universal AN are presented in Figs. \ref{fig:Example1_top_lambdas}-(a) and (b). This is a worst case scenario for the AN:  a node subject to a high noise level will produce poor estimates, that will rapidly spread since Node 1 is well-connected. Fig. \ref{fig:Example1_MSD_MSD}-(a) shows the network MSD$(i)$ evolution for a stationary plant and Fig. \ref{fig:Example1_MSD_MSD}-(b) depicts the algorithms tracking a non-stationary plant following model (\ref{eq:RandomWalk}), with $\sigma^2_q=5\cdot10^{-6}$. Figure \ref{fig:Example1_MSD_MSD}-(a) shows how the LS-alg can perform very well in some cases where the plant is either stationary, or varies very slowly, and can be an option if its extra computational complexity is not a problem; the dark blue LS-alg curve uses $\gamma=0.99$, presenting a faster transient, while the blue LS-alg curve uses $\gamma = 0.9999$ and attains a better steady-state performance. However, when the plant varies faster, as happens in Fig. \ref{fig:Example1_MSD_MSD}-(b), the LS-alg cannot keep up, presenting a major drop in performance, while the U-sup and the MSD-alg perform much better. Figure \ref{fig:Example1_SS_Rob}-(a) presents the steady-state performance $\MSD_n(\infty)$ at the node level for $\sigma^2_q=5\cdot10^{-6}$, and one can see that both U-sup and MSD-alg are universal, while the LS-alg is not. Figure  \ref{fig:Example1_SS_Rob}-(b) depicts the robustness of all three methods: for slowly varying plants, up to a certain degree, the LS-alg performs better than the other two algorithms. However, as the plant starts evolving faster, LS-alg degrades more rapidly than the other algorithms, becoming worse than the non-cooperative case beyond $\sigma^2_q=5\cdot 10^{-8}$, therefore losing universality.  The U-sup is the only algorithm that outperforms the non-cooperative case across the entire $\sigma^2_q$ test range, also when $\sigma_q^2\to 0$ (tested, but not shown).

\begin{figure}[!tb]
	\begin{minipage}[c]{0.40\columnwidth}
	\centering
		\includegraphics[width=\columnwidth]{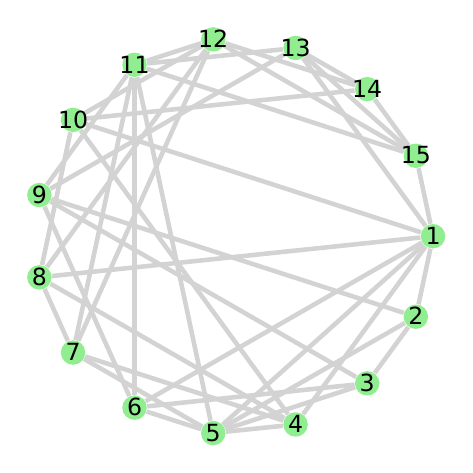}
		\small{(a)}
	\end{minipage}
	\hfill
	\begin{minipage}[c]{0.5\columnwidth}
	\centering
		\includegraphics[width=\columnwidth]{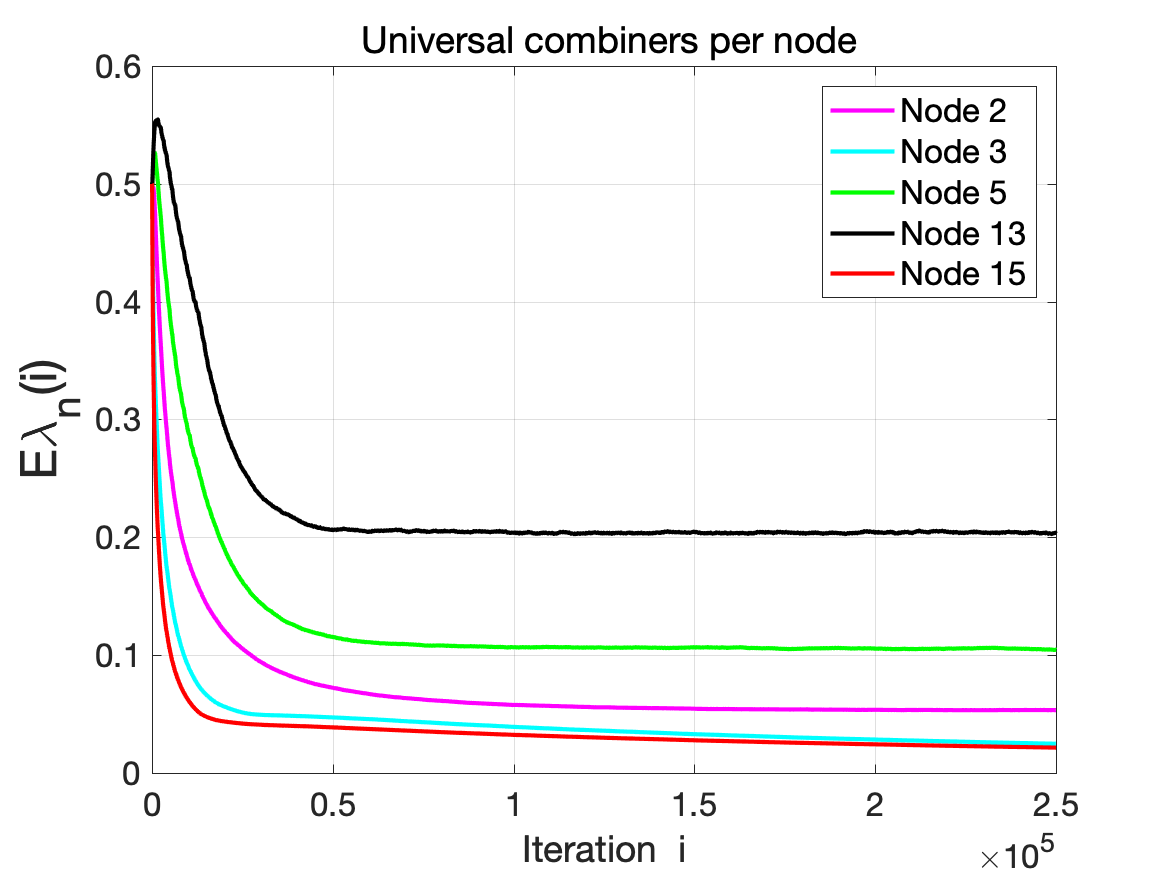}
		\vspace{-0.5cm}
		\small{(b)}
	\end{minipage}
	\caption{\textbf{Example 1 (White inputs)}: (a) Network topology; (b) The mean adaptive combiners E$\lambda_{n}(i)$ corresponding to Fig. \ref{fig:Example1_MSD_MSD}-(a).}
		\label{fig:Example1_top_lambdas}
	\vspace{-0.2cm}
\end{figure}
%

\begin{figure}[!tb]
	\begin{minipage}[c]{0.49\columnwidth}
	    \centering
		\includegraphics[width=\columnwidth]{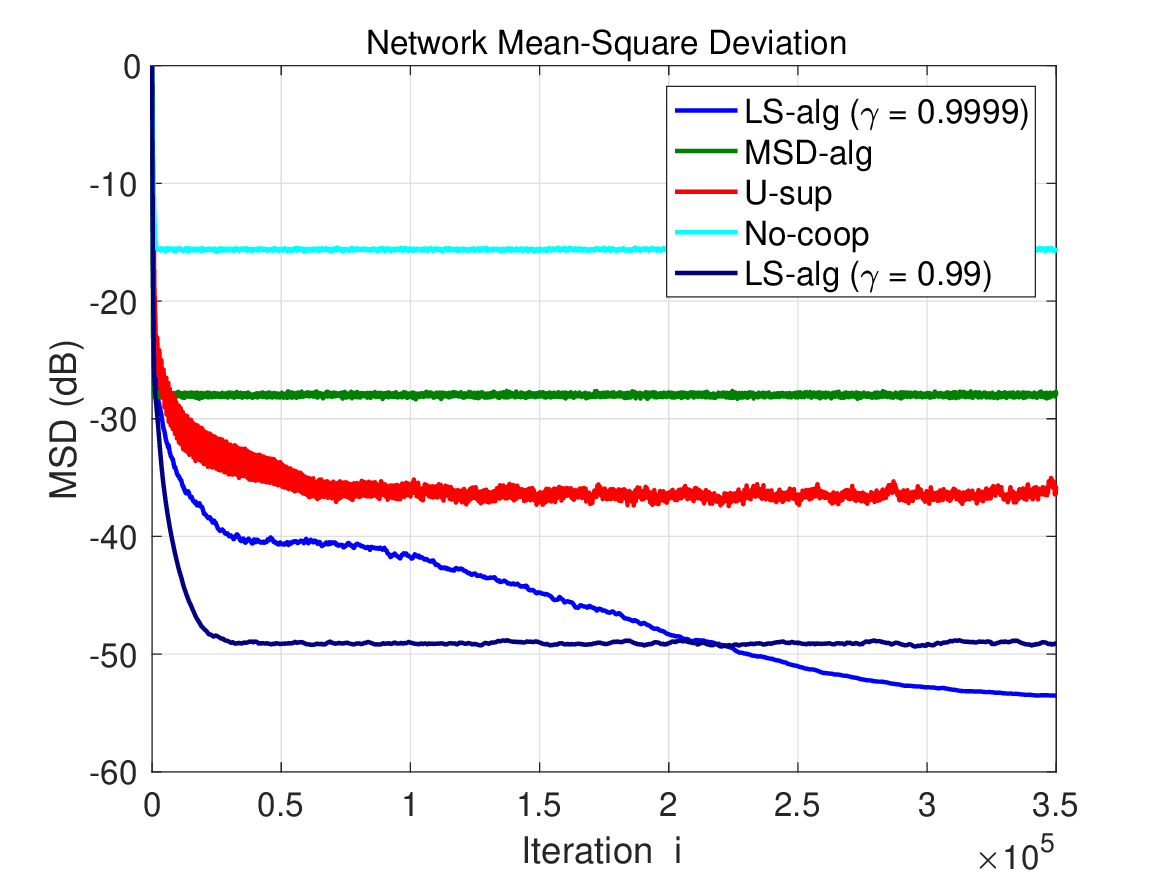}
		\small{(a)}
	\end{minipage}
	\hfill
	\begin{minipage}[c]{0.49\columnwidth}
	    \centering
		\includegraphics[width=\columnwidth]{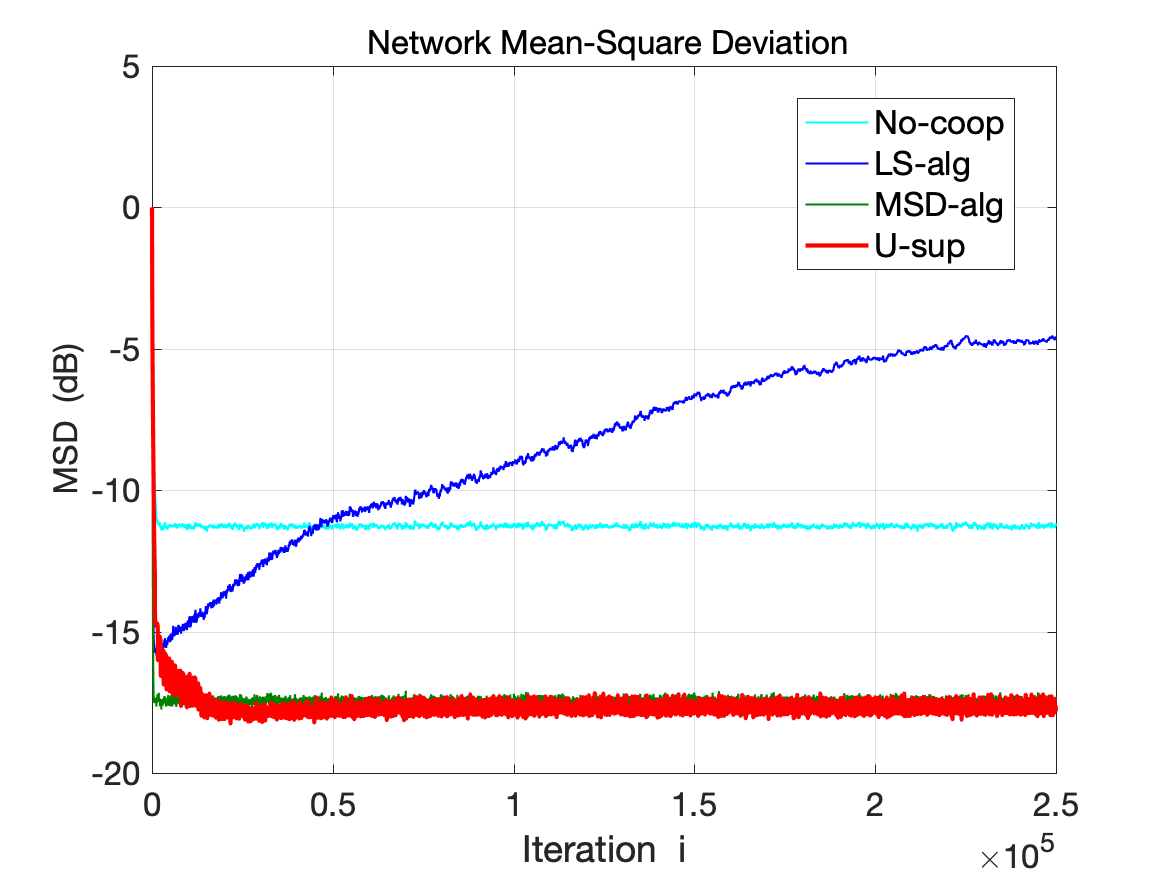}
		\small{(b)}
	\end{minipage}
	\caption{\textbf{Example 1 (White inputs)}: (a) Network MSD$(i)$ for stationary plant; (b) Network MSD$(i)$ for a random walk plant with $\sigma^2_q=5\cdot10^{-6}$.}
		\label{fig:Example1_MSD_MSD}
	\vspace{-0.5cm}
\end{figure}

\begin{figure}[!tb]
	\begin{minipage}[c]{0.49\columnwidth}
	    \centering
		\includegraphics[width=\columnwidth]{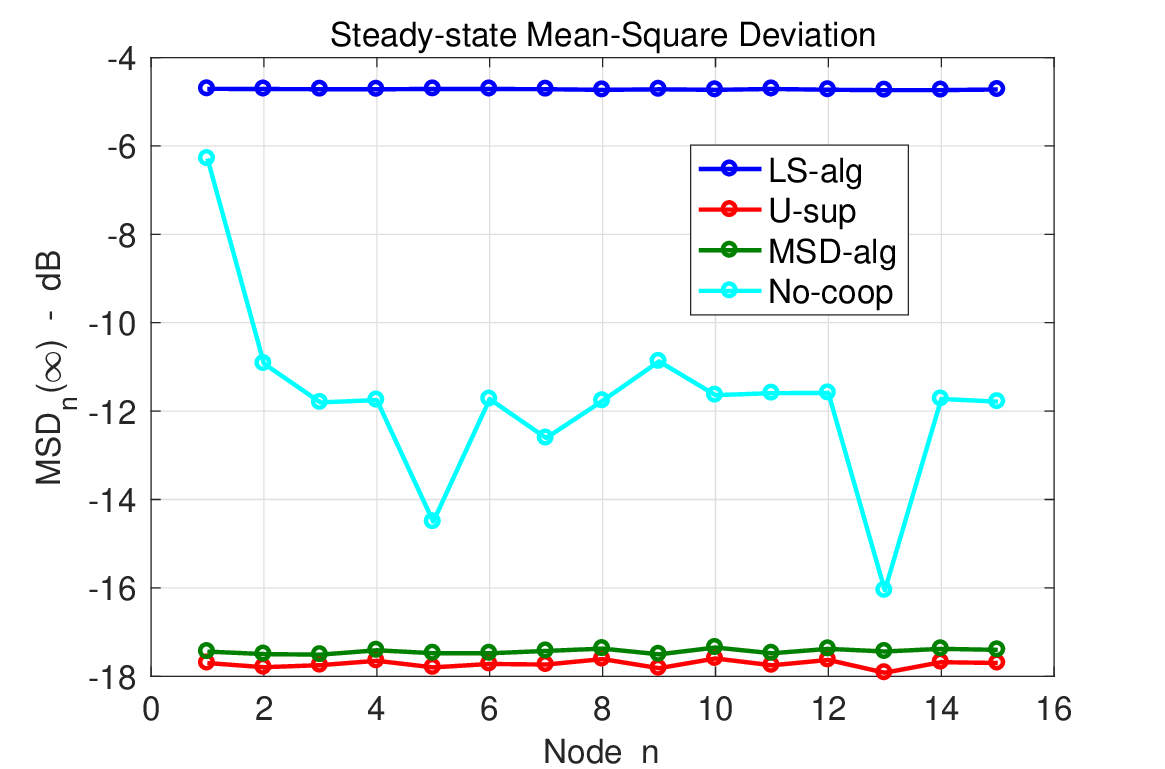}
		\small{(a)}
	\end{minipage}
	\hfill
	\begin{minipage}[c]{0.46\columnwidth}
	    \centering
		\includegraphics[width=\columnwidth]{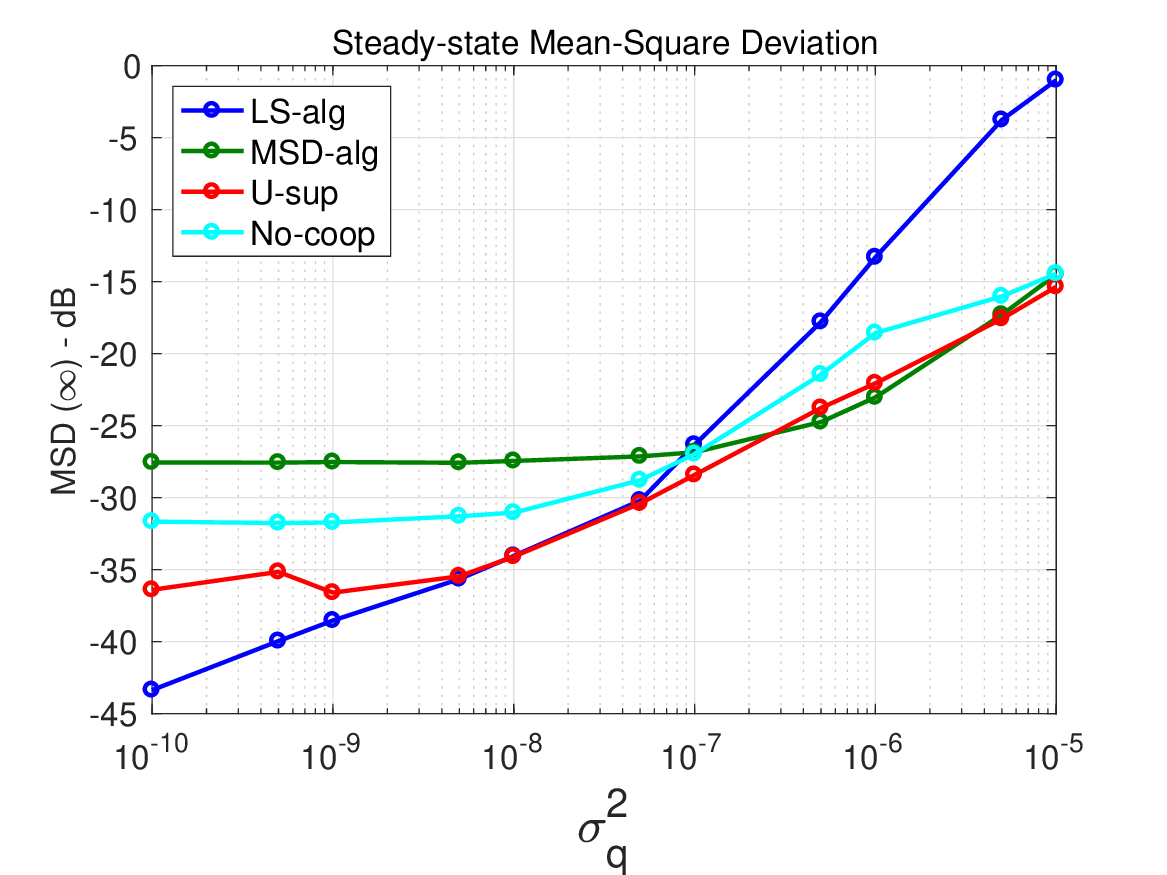}
		\small{(b)}
	\end{minipage}
	\caption{\textbf{Example 1 (White inputs)}: (a) Steady-state MSD$_n(\infty)$ for $\sigma^2_q=5\cdot10^{-6}$ corresponding to Fig. \ref{fig:Example1_MSD_MSD}-(b); (b) Tracking robustness in terms of MSD$(\infty$) versus  $\sigma^2_q$: $\min_n$ MSD$_n(\infty)$ for the non-cooperative case and $\max_n$ MSD$_n(\infty)$ for the cooperative algorithms.}
		\label{fig:Example1_SS_Rob}
	\vspace{-0.5cm}
\end{figure}


In \textbf{Example 2}, we test the network ability to recruit and propagate the exceptional Node 1, which is poorly connected (one connection only). This is the opposite scenario of Example 1. Figs \ref{fig:Example2_top_lambdas}-(a) and (b) respectively depict the network topology and the mean combiners E$\lambda_n(i)$ for the U-sup algorithm corresponding to the network MSD depicted in Fig.\ref{fig:Example2_MSD_MSD}-(a). The input data $\{u_n(i)\}$ is white and the plant $w^o_i$ again evolves according to (\ref{eq:RandomWalk}). The network SNR and step-sizes are respectively SNR=$[22.5, 11.6, 14.5, 17.7, 10.1, 14.7, 10.3, 18.7, 10.5, 17.8,$
$15.9, 12.4, 17.8, 15.2, 19.5]$ and $\mu_k=0.1\cdot[1, 10, 1, 1, 10, 1,$ $10, 1, 10, 1, 1, 1, 10, 1, 1]$.
Figure \ref{fig:Example2_MSD_MSD}-(a) shows how U-sup and LS-alg algorithms perform (much) better than the non-cooperative case, and better than the MSD-alg, when the time-varying plant evolves under $\sigma_q^2=10^{-7}$. Increasing the plant velocity to $\sigma_q^2=10^{-5}$, in Fig. \ref{fig:Example2_MSD_MSD}-(b), the LS-alg is nearly 10 dB worse (and still worsening: it has not converged after $2\cdot 10^5$ iterations) than the U-sup and MSD-alg, and worse than the non-cooperative case. Figure \ref{fig:Example2_SS_rob}-(a) corresponds to the steady-state for Fig. \ref{fig:Example2_MSD_MSD}-(a), and shows that both U-sup and LS-alg are universal for $\sigma_q^2=10^{-7}$, while the MSD-alg is not. The algorithm robustness is depicted by Fig. \ref{fig:Example2_SS_rob}-(b), again confirming that the U-sup is the only algorithm that is universal across the entire  $\sigma_q^2$ test range, also depicting the LS-alg sensitivity when tracking fast varying plants.

\begin{figure}[!t]
	\begin{minipage}[c]{0.40\columnwidth}
	    \centering
		\includegraphics[width=\columnwidth]{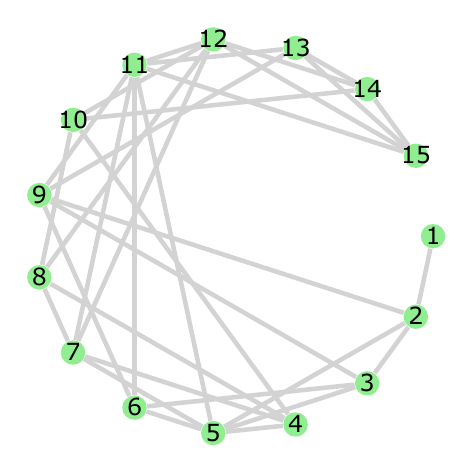}
		\small{(a)}
	\end{minipage}
	\hfill
	\begin{minipage}[c]{0.50\columnwidth}
	    \centering
		\includegraphics[width=\columnwidth]{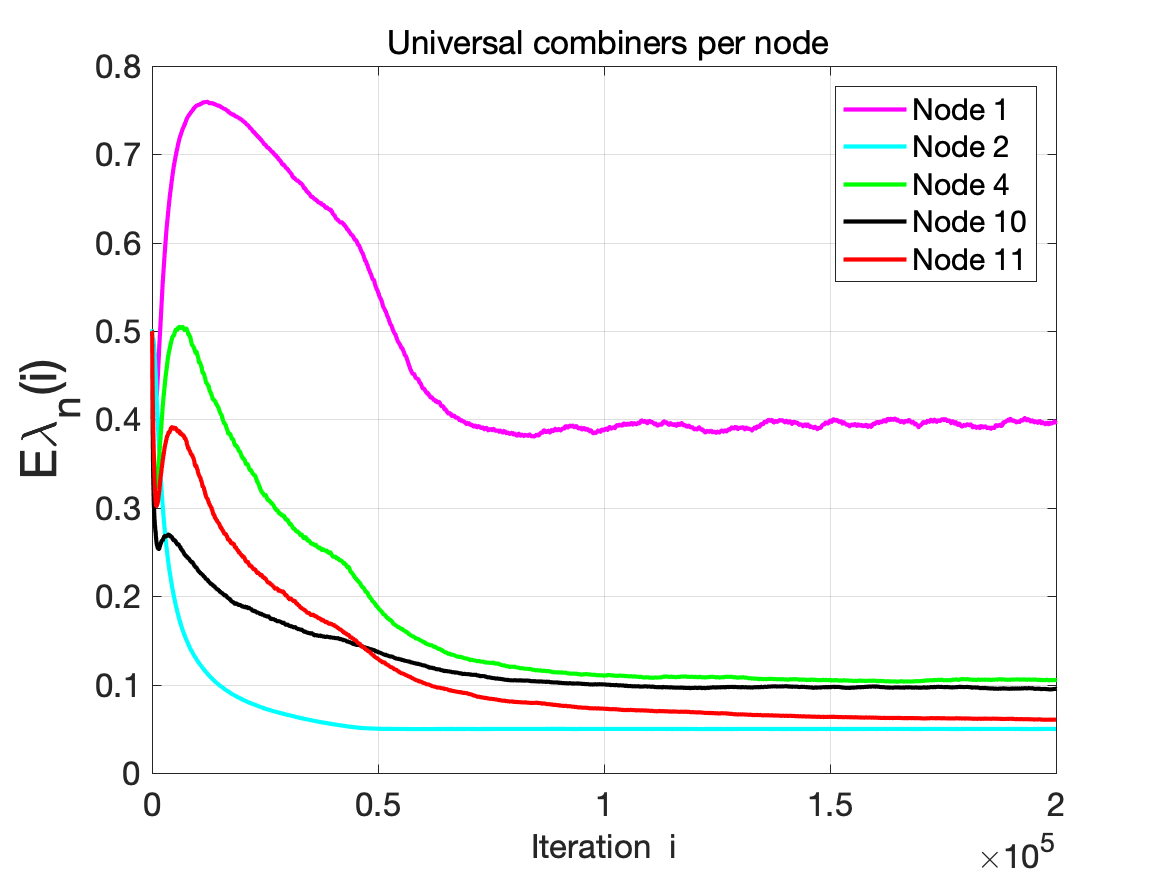}
		\small{(b)}
	\end{minipage}
	\caption{\textbf{Example 2 (White inputs)}: (a) Network topology; (b) The mean adaptive combiners E$\lambda_{n}(i)$ corresponding to Fig.\ref{fig:Example2_MSD_MSD}-(a).}
		\label{fig:Example2_top_lambdas}
	\vspace{-0.5cm}
\end{figure}

\begin{figure}[!t]
	\begin{minipage}[c]{0.49\columnwidth}
	    \centering
		\includegraphics[width=\columnwidth]{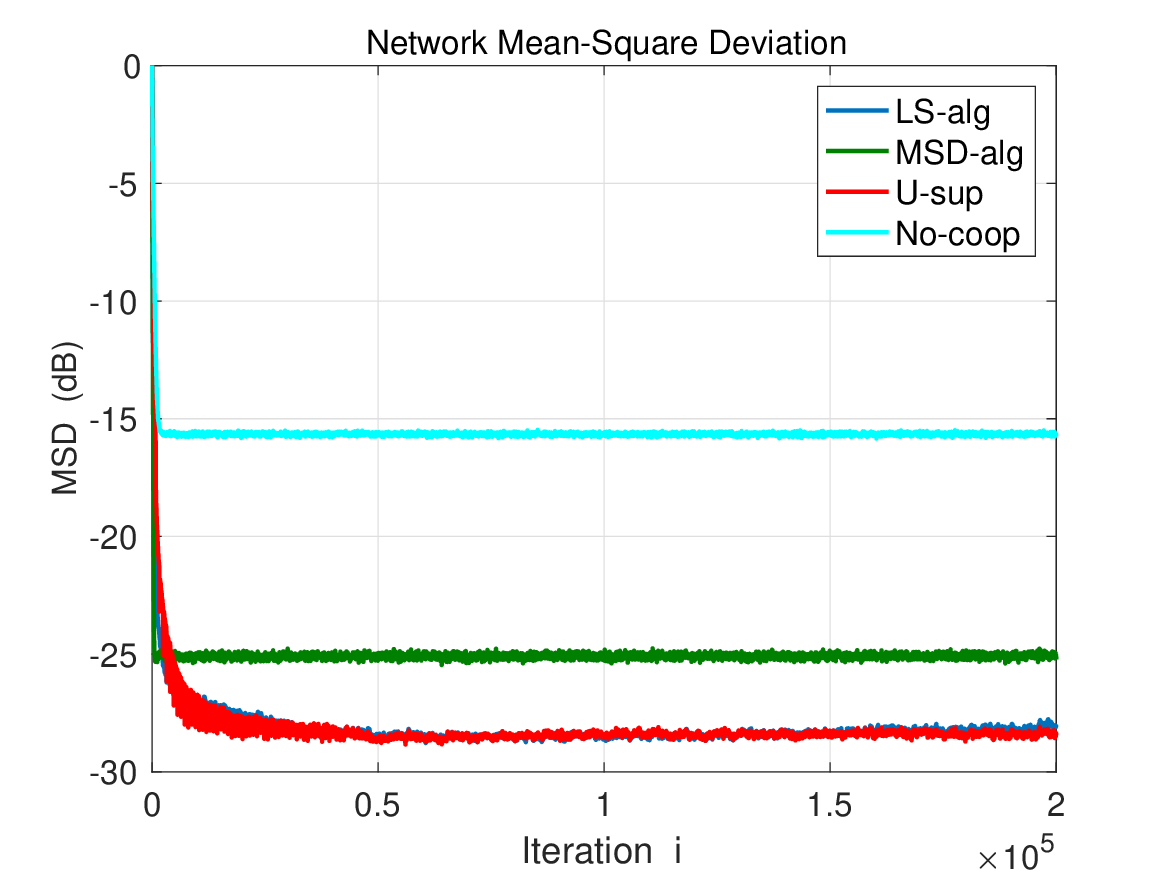}
		\small{(a)}
	\end{minipage}
	\hfill
	\begin{minipage}[c]{0.49\columnwidth}
	    \centering
		\includegraphics[width=\columnwidth]{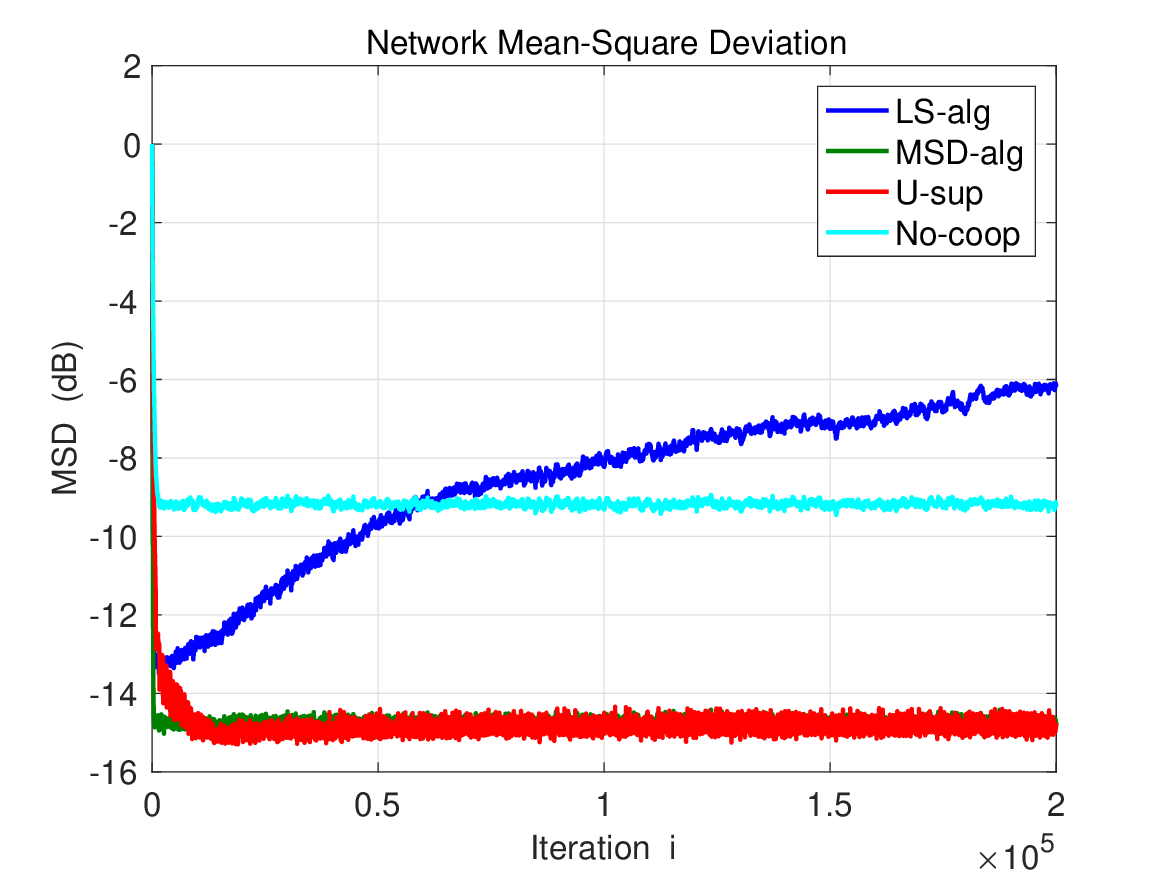}
		\small{(b)}
	\end{minipage}
	\caption{\textbf{Example 2 (White inputs)}: (a) Network $\MSD(i)$ for a non-stationary plant with $\sigma^2_q=10^{-7}$; (b) Network $\MSD(i)$ when $\sigma^2_q=10^{-5}$.}
		\label{fig:Example2_MSD_MSD}
	\vspace{-0.3cm}
\end{figure}

\begin{figure}[!t]
	\begin{minipage}[c]{0.49\columnwidth}
	    \centering
		\includegraphics[width=\columnwidth]{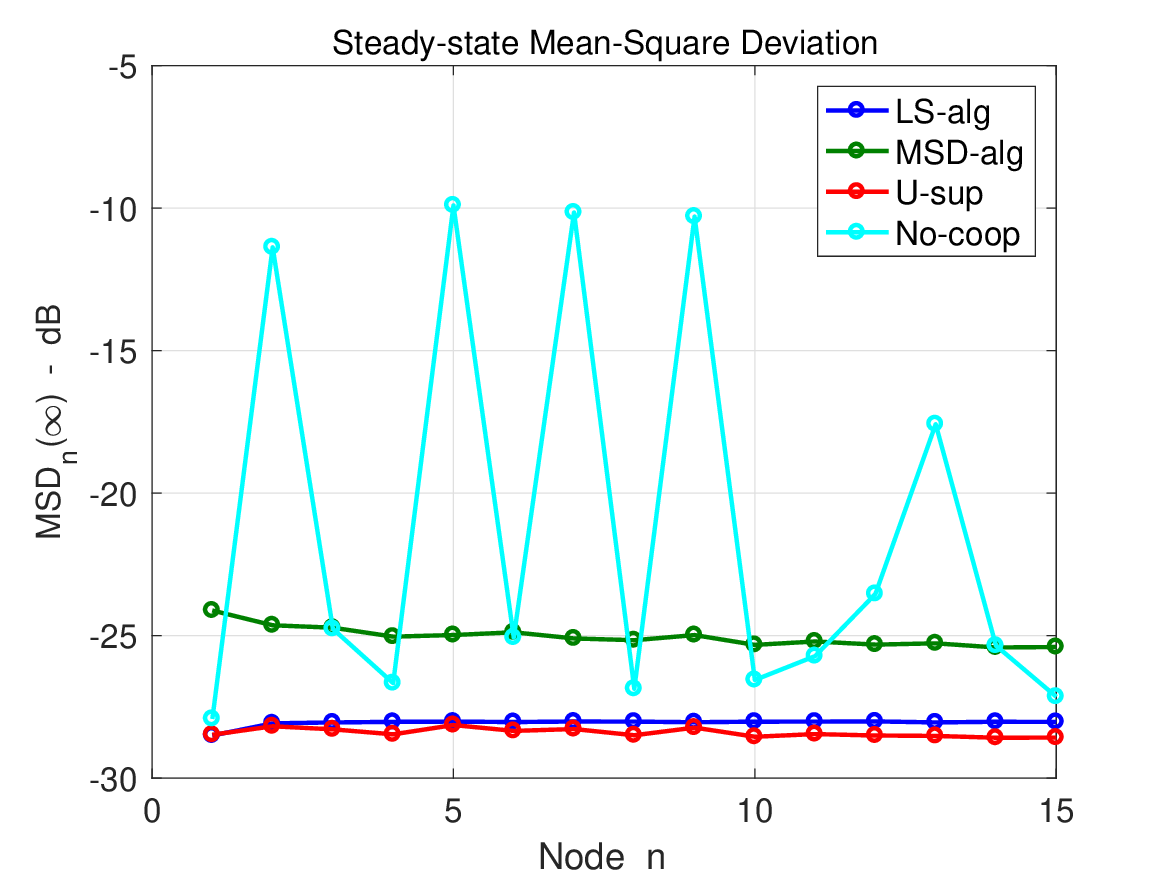}
		\small{(a)}
	\end{minipage}
	\hfill
	\begin{minipage}[c]{0.49\columnwidth}
	    \centering
		\includegraphics[width=\columnwidth]{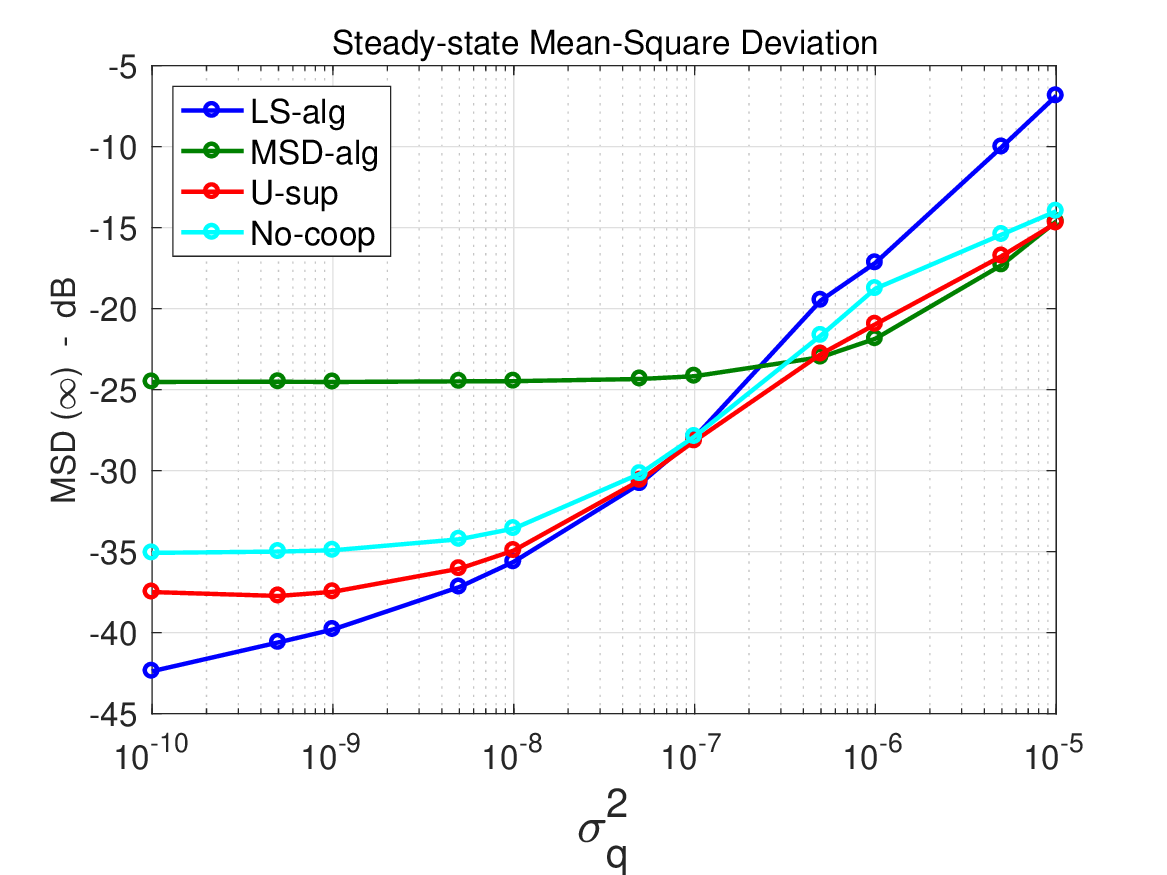}
		\small{(b)}
	\end{minipage}
	\caption{\textbf{Example 2 (White inputs)}: (a) Steady-state $\MSD_n(\infty)$ for Fig. \ref{fig:Example2_MSD_MSD}-(a);  (b) Tracking robustness in terms of MSD$(\infty)$ versus $\sigma^2_q$: $\min_n$ MSD$_n(\infty)$ for the non-cooperative case and $\max_n$ MSD$_n(\infty)$ for the cooperative algorithms.}
		\label{fig:Example2_SS_rob}
	\vspace{-0.3cm}
\end{figure}


\textbf{In Example 3}, the scenario is what is more likely to take place in practice, where nodes have approximately the same node degree, without a clearly exceptional or poor node in terms of SNR, and considering correlated inputs $\{u_n(i)\}$ obtained from (\ref{eq:Corr_model}), with correlation factors $\{\beta_n\}$ randomly selected from the two values $0.63$ and $0.95$ and captured by the vector $\beta=[0.63, 0.95, 0.63, 0.63, 0.95, 0.63, 0.95, 0.63, 0.95, 0.63,$
$0.63, 0.63, 0.95, 0.63, 0.63]$. Figures \ref{fig:Example3_top_lambdas}-(a) and (b) depict network topology and the mean combiners E$\lambda_{n}(i)$ corresponding to Figs. \ref{fig:Example3_MSD_MSD}-(a) and \ref{fig:Example3_SS_Rob}-(a); the former presenting the performance for a stationary plant, the latter of a non-stationary plant. The network SNR and step-sizes are SNR$~=~[12.2, 15.2, 15.5, 15.5, 20, 11.2, 17.2, 12.4, 17.8, 16.1,$ $12.6, 10.1, 19.5, 12.1, 12.6]$ and $\mu_k=0.1\cdot[1, 10, 1, 1, 10, 1, 10,$ $1, 10, 1, 1, 1, 10, 1, 1]$.
We then  test the algorithms when tracking a non-stationary plant with $\sigma^2_q = 10^{-7}$ in Fig. \ref{fig:Example3_MSD_MSD}-(b).  Observe in Figs. \ref{fig:Example3_MSD_MSD}-(a) and (b) how the U-sup algorithm outperforms both the MSD-alg and LS-alg algorithms. As depicted in Figs. \ref{fig:Example3_SS_Rob}-(a) and (b), the three algorithms present a similar performance, however once more only the U-sup attains universality in the entire test range for $\sigma^2_q$.

\begin{figure}[!t]
	\begin{minipage}[c]{0.40\columnwidth}
	    \centering
		\includegraphics[width=\columnwidth]{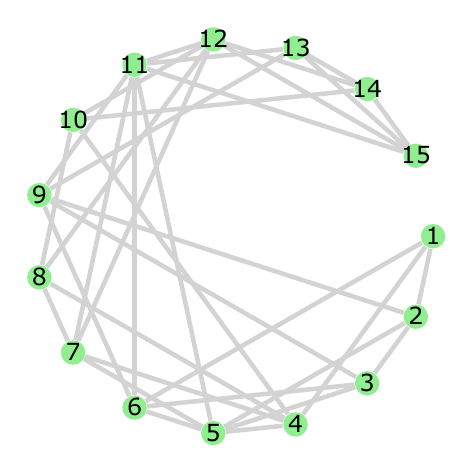}
		\small{(a)}
	\end{minipage}
	\hfill
	\begin{minipage}[c]{0.50\columnwidth}
	    \centering
		\includegraphics[width=\columnwidth]{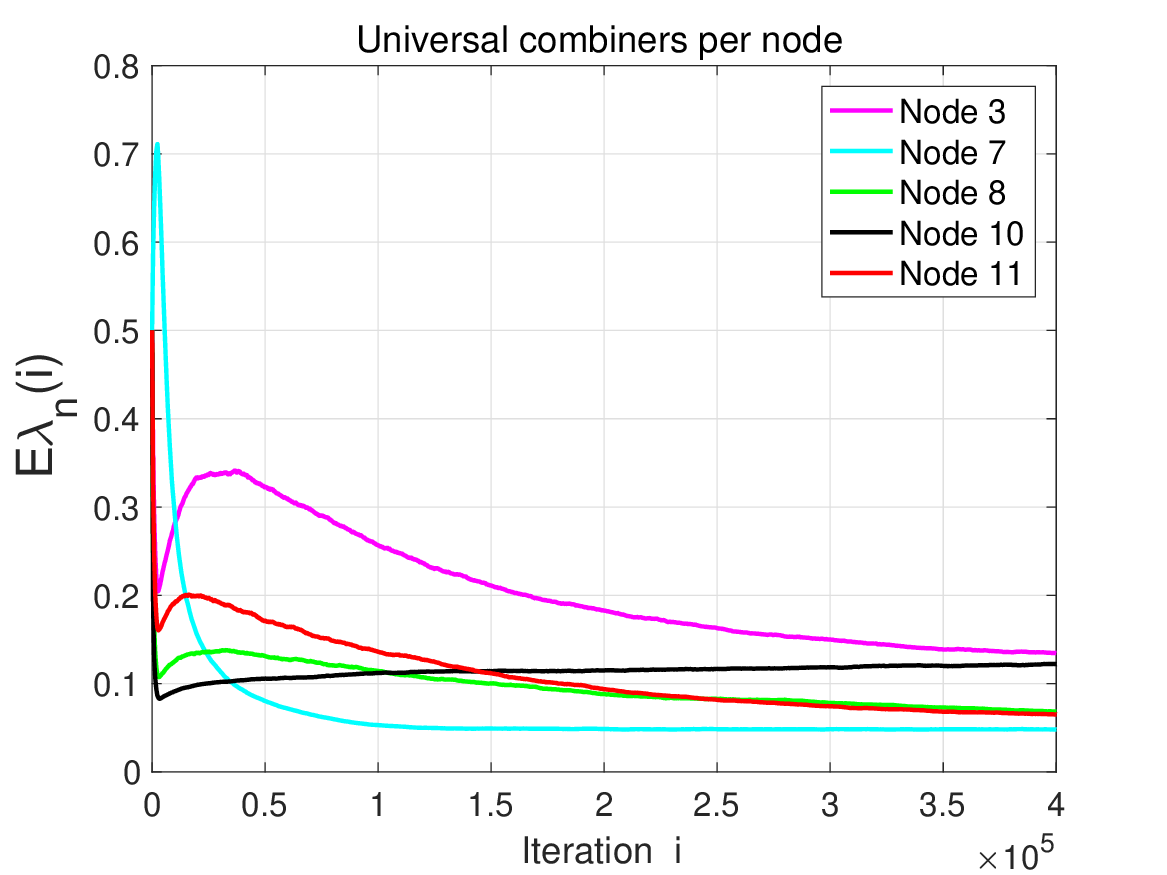}
		\small{(b)}
	\end{minipage}
	\caption{\textbf{Example 3 (Correlated inputs)}: (a) Network topology; (b) The mean adaptive combiners E$\lambda_{n}(i)$ corresponding to Figs. \ref{fig:Example3_MSD_MSD}-(a) and \ref{fig:Example3_SS_Rob}-(a).}
		\label{fig:Example3_top_lambdas}
	\vspace{-0.3cm}
\end{figure}

\begin{figure}[!t]
	\begin{minipage}[c]{0.49\columnwidth}
	    \centering
		\includegraphics[width=\columnwidth]{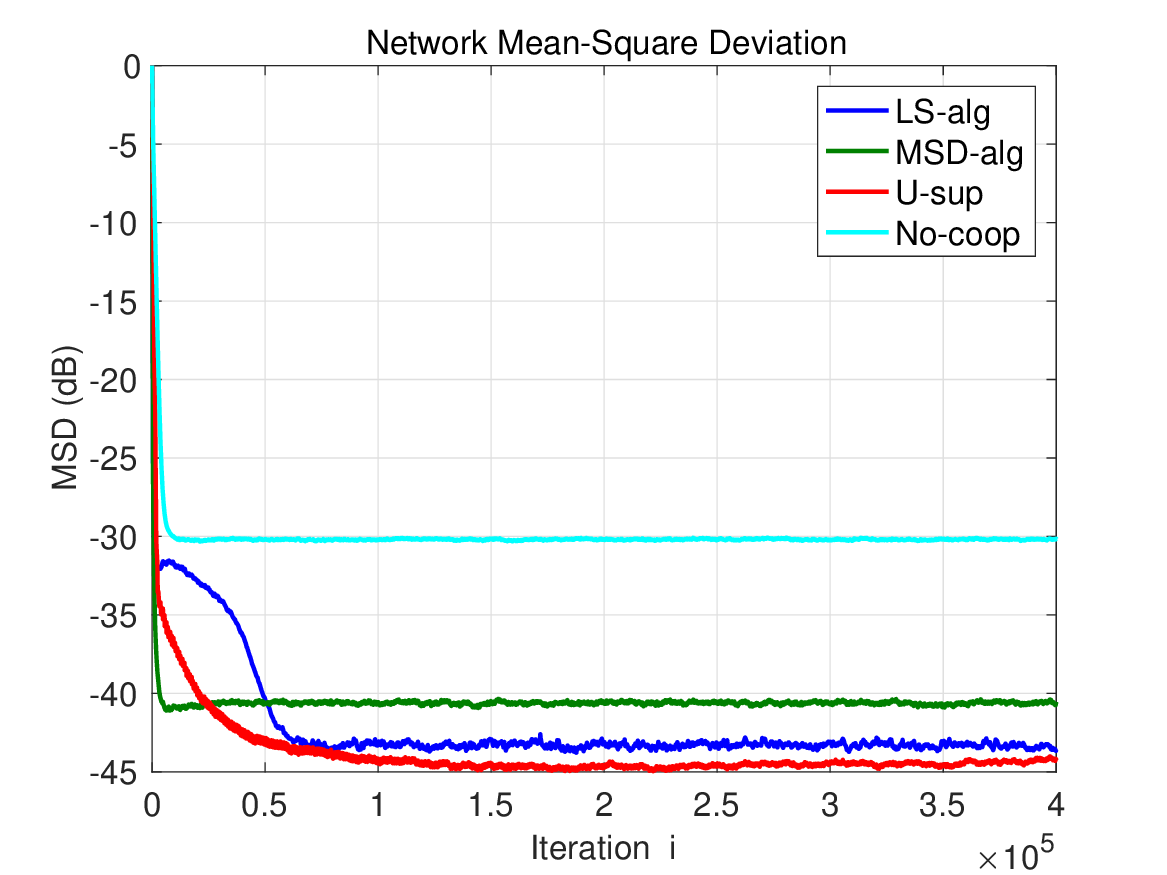}
		\small{(a)}
	\end{minipage}
	\hfill
	\begin{minipage}[c]{0.49\columnwidth}
	    \centering
		\includegraphics[width=\columnwidth]{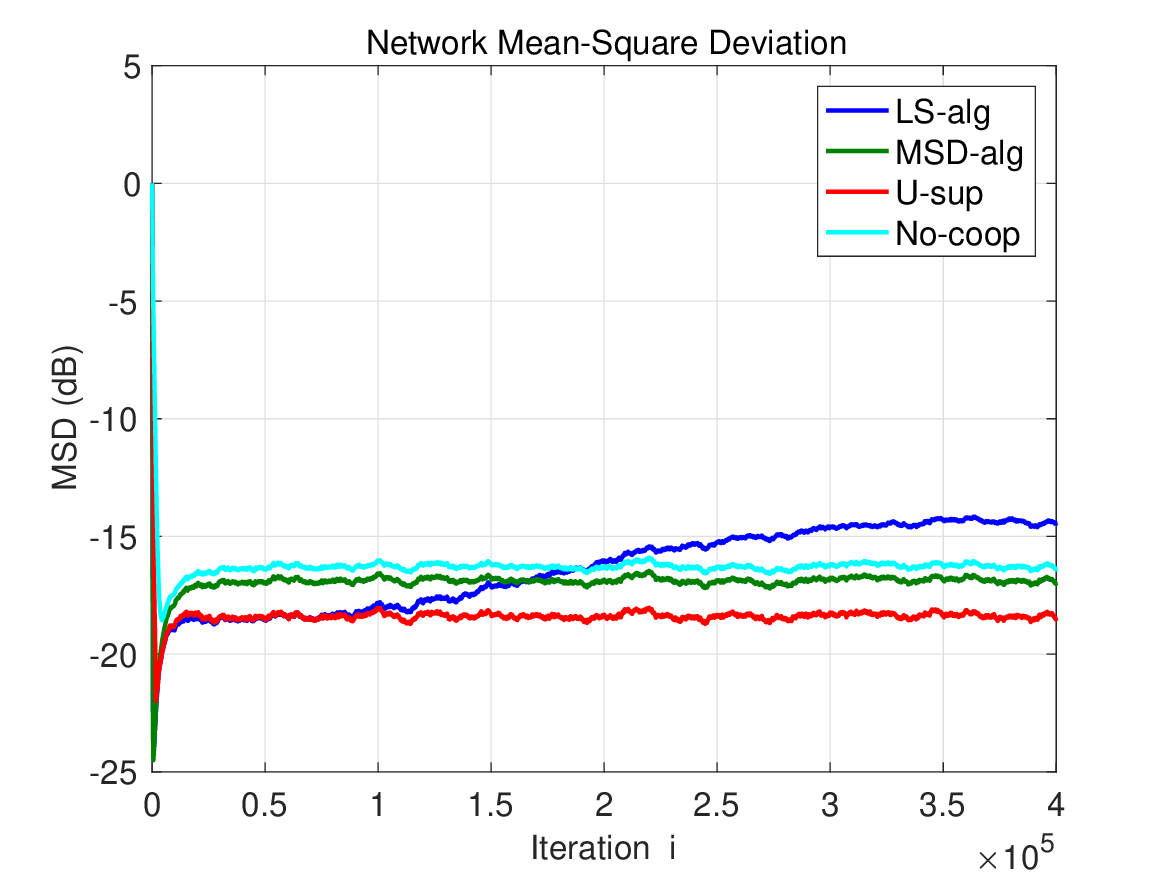}
		\small{(b)}
	\end{minipage}
	\caption{\textbf{Example 3 (Correlated inputs)}: (a) Network $\MSD(i)$ for correlated inputs and a stationary plant; (b) Network $\MSD(i)$ for the same inputs and a time-varying plant with $\sigma^2_q = 10^{-7}$.}
		\label{fig:Example3_MSD_MSD}
	\vspace{-0.3cm}
\end{figure}


\begin{figure}[!t]
	\begin{minipage}[c]{0.49\columnwidth}
	    \centering
		\includegraphics[width=\columnwidth]{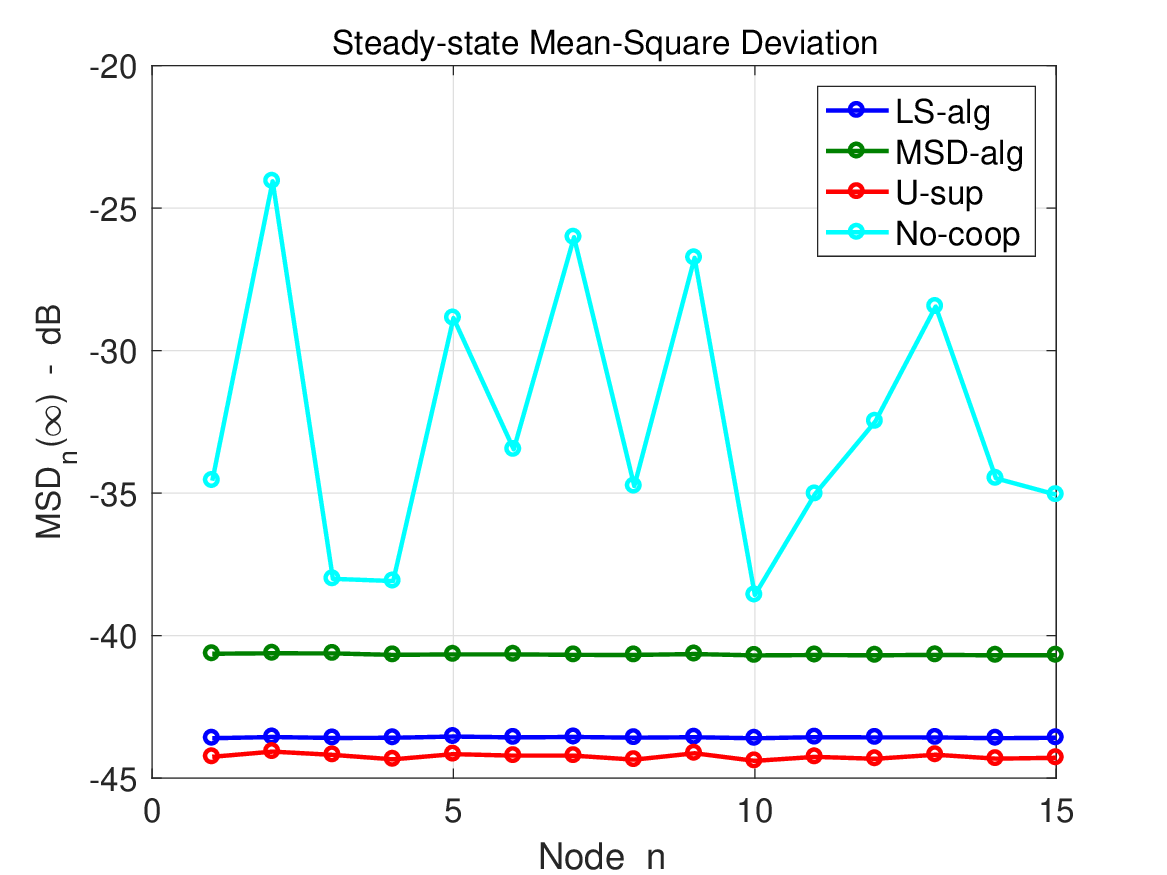}
		\small{(a)}
	\end{minipage}
	\hfill
	\begin{minipage}[c]{0.49\columnwidth}
	    \centering
		\includegraphics[width=\columnwidth]{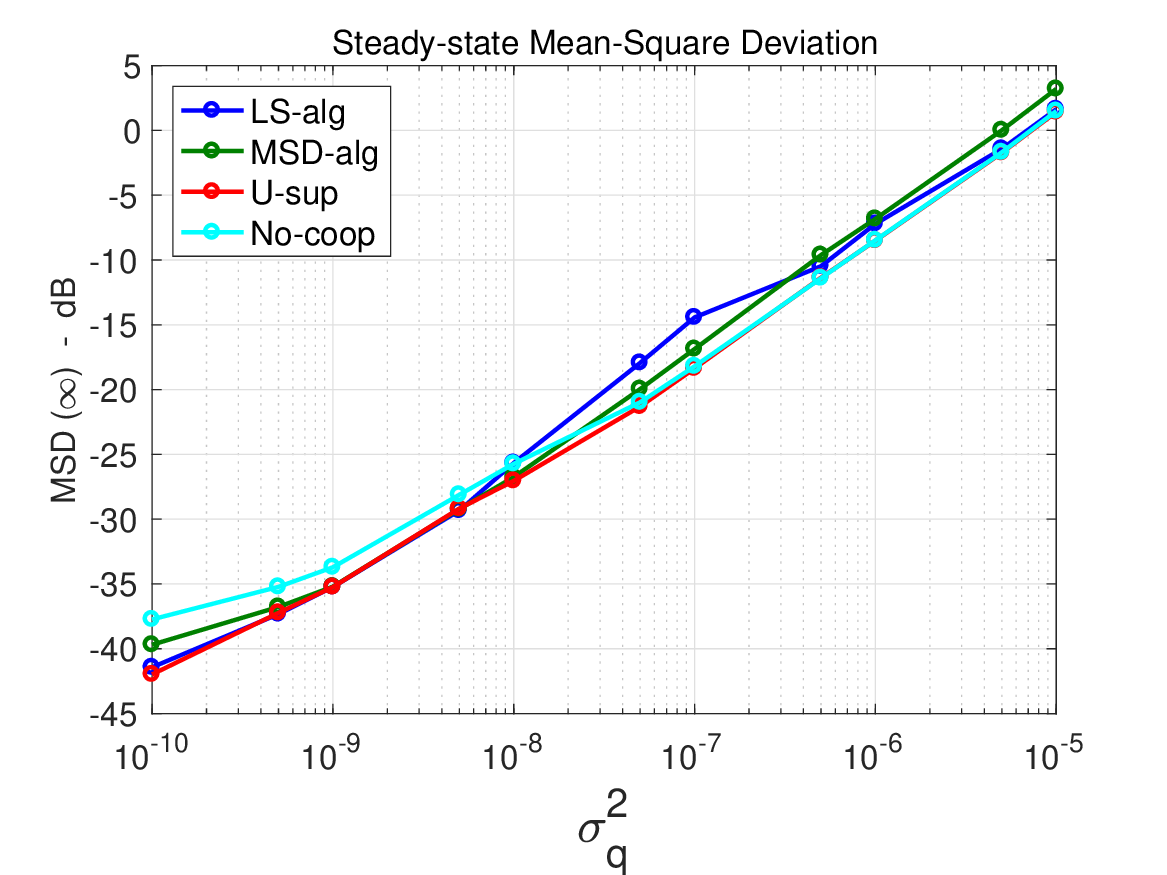}
		\small{(b)}
	\end{minipage}
	\caption{\textbf{Example 3 (Correlated inputs)}: (a) Steady-state $\MSD_n(\infty)$;  (b) Tracking robustness in terms of MSD versus $\sigma^2_q$: $\min_n$ MSD$_n(\infty)$ for the non-cooperative case and $\max_n$ MSD$_n(\infty)$ for the cooperative algorithms.}
		\label{fig:Example3_SS_Rob}
	\vspace{-0.3cm}
\end{figure}


In \textbf{Example 4}, we test the theoretical model developed in Section \ref{sec:Analysis}. For that, we revisit the scenario from Example 3, changing the non-stationarity level for $\sigma^2_q=10^{-10}$, and set the feedback cycle $L\to\infty$.
Figure \ref{fig:Example4_MSD_eta}-(a) shows the network MSD evolution, and Fig. \ref{fig:Example4_MSD_eta}-(b) presents the corresponding mean combiner evolution $\text{E}\lambda_n(i)$. Note how the theoretical model is able to capture the general tendencies correctly.

\begin{figure}[!t]
	\begin{minipage}[c]{0.49\columnwidth}
	    \centering
		\includegraphics[width=\columnwidth]{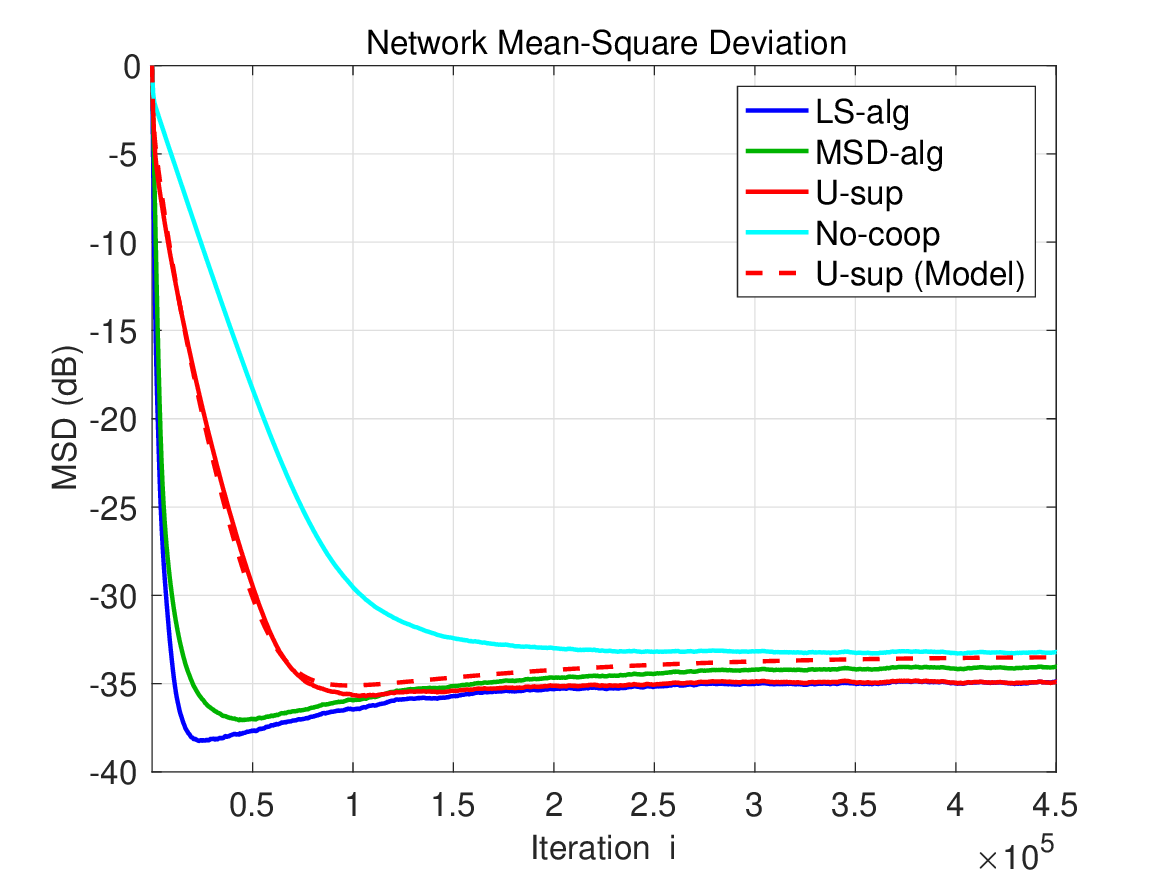}
		\small{(a)}
	\end{minipage}
	\hfill
	\begin{minipage}[c]{0.49\columnwidth}
	    \centering
		\includegraphics[width=\columnwidth]{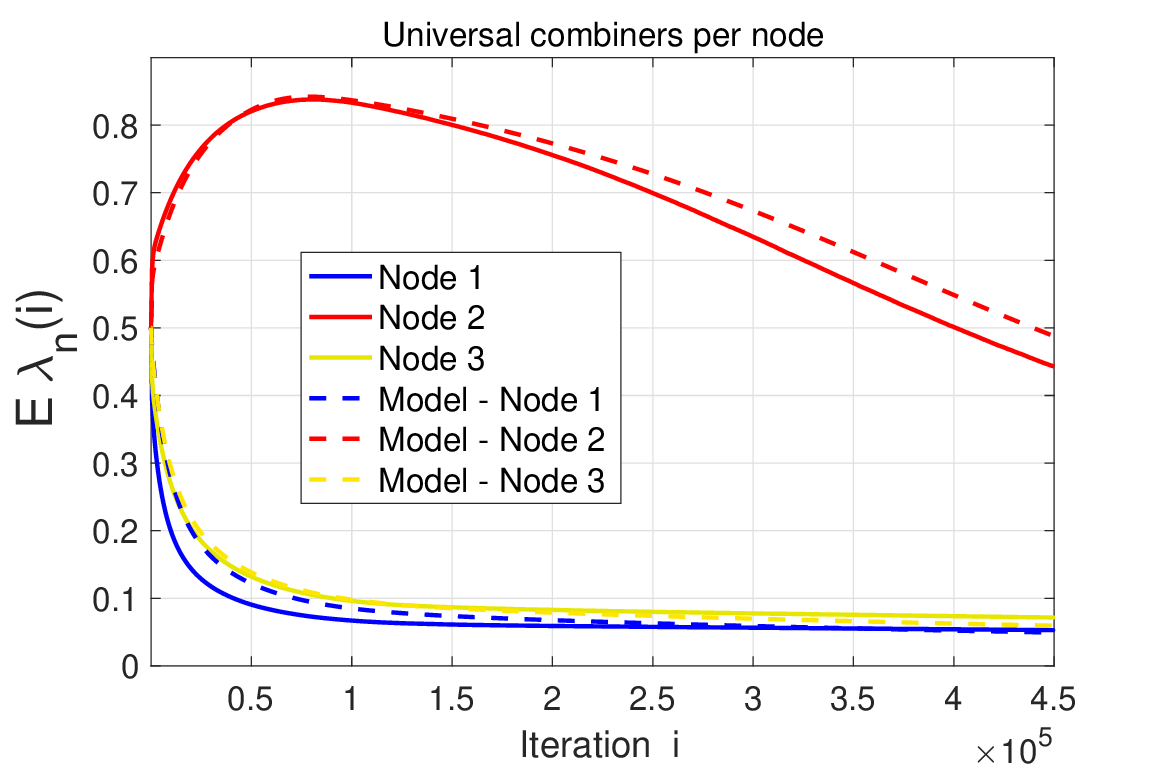}
		\small{(b)}
	\end{minipage}
	\caption{\textbf{Example 4 (Correlated inputs)}: (a) Network $\MSD(i)$ and the theoretical model for U-sup (dashed red); (b) The mean adaptive combiners E$\lambda_{n}(i)$ for a few nodes, corresponding to Fig \ref{fig:Example4_MSD_eta}-(a).}
		\label{fig:Example4_MSD_eta}
	\vspace{-0.3cm}
\end{figure}


In \textbf{Example 5}, we test the theoretical model from Section \ref{sec:Analysis} for white inputs in a network with $N=8$, NLMS AFs with order $M=6$, with the U-sup  using $L=800$, all identifying a stationary plant. The stepsizes for this example are captured by the vector $\mu=0.01\cdot[1, 10, 1, 10, 1, 1, 1, 1]$. The network topology is described by the reduced undirected edge set $E'=[12, 13, 18, 24, 25, 34, 56, 58, 67, 78]$, in which, for example, the pair $12$ means there are edges between nodes $1$ and $2$, between node $1$ and itself and between $2$ and itself. The SNR across the network is SNR$~=~[11.2, 10.6, 18.4, 13.4, 17.8, 11.2, 16.8, 10.9]$. The theoretical model is shown in Figs. \ref{fig:Example5_MSD_eta}(a) and (b) to describe well the network MSD performance and the combiner evolution.

\begin{figure}[!t]
	\begin{minipage}[c]{0.49\columnwidth}
	    \centering
		\includegraphics[width=\columnwidth]{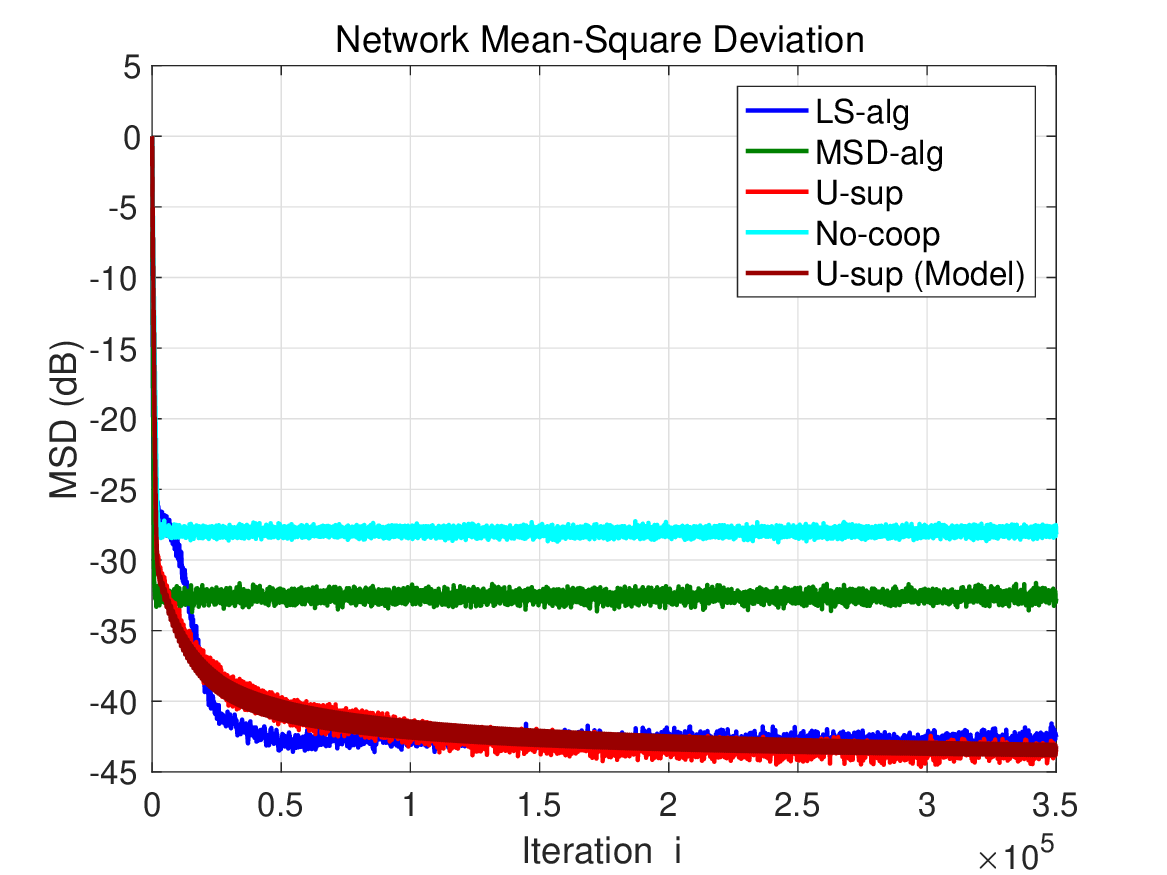}
		\small{(a)}
	\end{minipage}
	\hfill
	\begin{minipage}[c]{0.49\columnwidth}
	    \centering
		\includegraphics[width=\columnwidth]{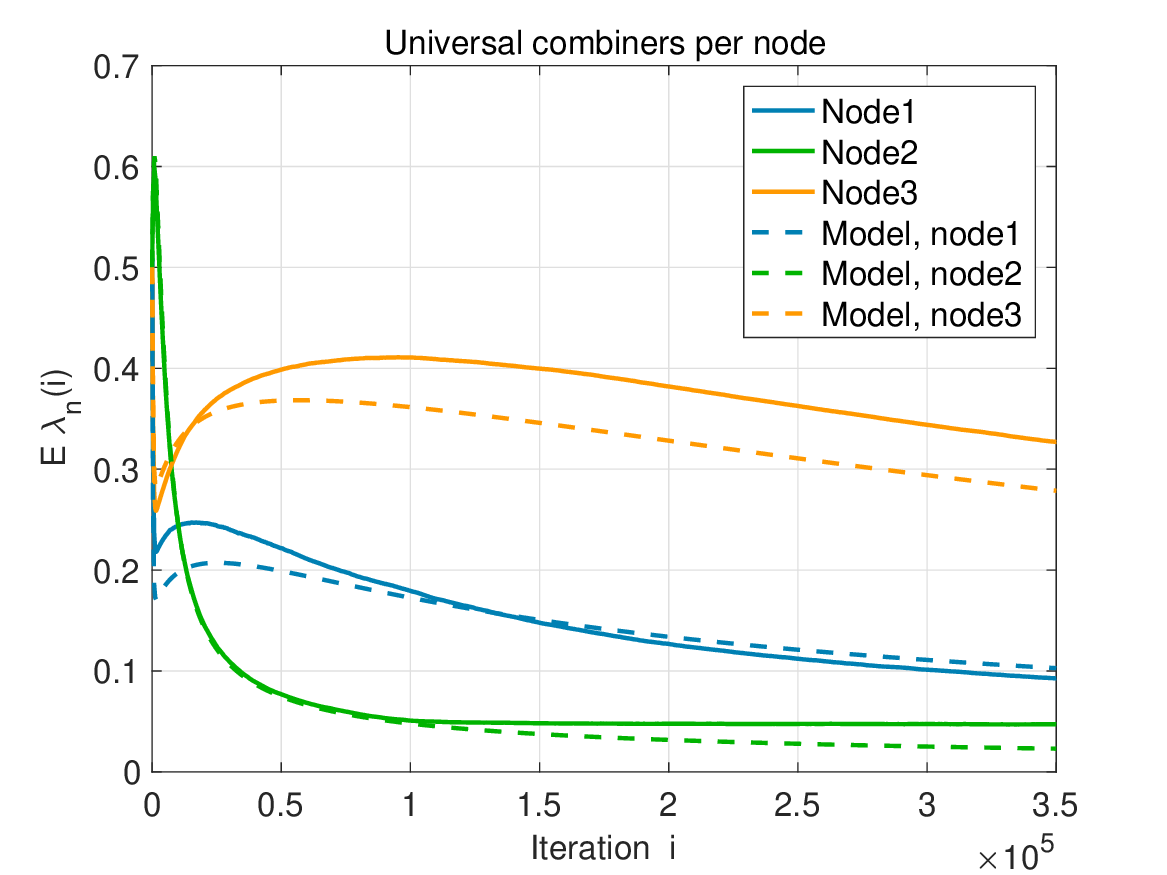}
		\small{(b)}
	\end{minipage}
	\caption{\textbf{Example 5 (White inputs)}: (a) Network MSD$(i)$ and the theoretical model for U-sup (dark red); (b) The mean adaptive combiners E $\lambda_{n}(i)$ for a few nodes and their theoretical model (dashed lines), all corresponding to Fig. \ref{fig:Example5_MSD_eta}-(a).}
		\label{fig:Example5_MSD_eta}
	\vspace{-0.3cm}
\end{figure}

\add{The last example, \textbf{Example 6}, shows the effect of $L$ in a network with $N=8$ and $M=6$ as in Example 5.  Figure \ref{fig:msdmax} shows the worst node MSD at various  iterations during convergence, for different values of $L$.  One can see how intermediate values of $L$ (between $10$ and $1,000$) accelerate convergence, without affecting the steady-state performance. In other words, the U-sup algorithm is relatively insensitive to the choice of $L$ over a wide range.
\begin{figure}[htbp]
\centering\includegraphics[width = 0.45\textwidth]{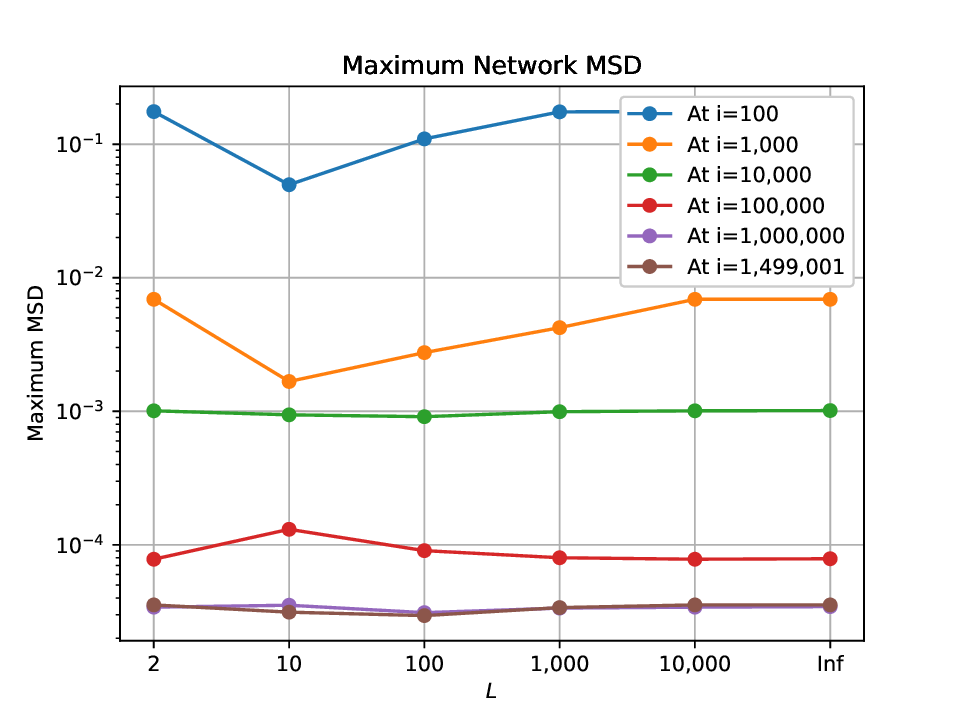}
\caption{\add{Performance comparison with different values of $L$ for a $8$-node network: worst-case node MSD at various points during convergence.  The values were obtained by averaging the MSD values at each node separately between instants $i$ and $i+999$, and taking the maximum value between the nodes.}}\label{fig:msdmax}
\end{figure}
}

\section{Conclusion}\label{sec:Conclusion}

This work has established the concept of universal estimation in the context of distributed adaptive estimation, also proposing a distributed adaptive protocol capable of attaining \add{distributed} global universality, as proved by Theorem \ref{thm:Global_universal} and shown by several simulations: the distributed supervisor drives the entire network to the best node performance, also efficiently rejecting poorly performing nodes, while promoting performance uniformity across the nodes.

Fair comparisons were carried out with two other directly competing algorithms, namely the MSD-alg (\ref{E:MSDSupervisor}) \cite{Takahashi10d}, and the LS-alg (\ref{E:LSAdaptiveSupervisor}) \cite{Bes2017}. The proposed U-sup algorithm (\ref{E:UniversalSupervisor}) is the simplest and was the only method to consistently achieve universality in all tested scenarios, under white and correlated data, for stationary and fast-varying plants. Algorithm (\ref{E:LSAdaptiveSupervisor}) performs very well for some stationary and slowly varying plants, however its computational complexity may limit its use in some applications.

Theoretical mean and mean-square error models were developed with a reasonable agreement with simulations, capturing the general tendencies of network MSD and local combiners $\text{E}\lambda_n(i)$, and proving convergence of the algorithm in the mean. The agreement between simulated and analytical curves improves as the local AF step-size decreases ($\mu_n\to0$); in the distributed case the same effect is further noticed when the universal combiner stepsize is also decreased, i.e., $\mu_a\to0$.

\ifCLASSOPTIONcaptionsoff
  \newpage
\fi






\bibliographystyle{IEEEtran}

\bibliography{IEEEabrv,af,adanet,afcomb,math,sp,telecom,consensus_bibliography,SelectiveCooperation}




%
%
%

\begin{IEEEbiography}
[{\includegraphics[width=1in,height=1.25in,clip,keepaspectratio]{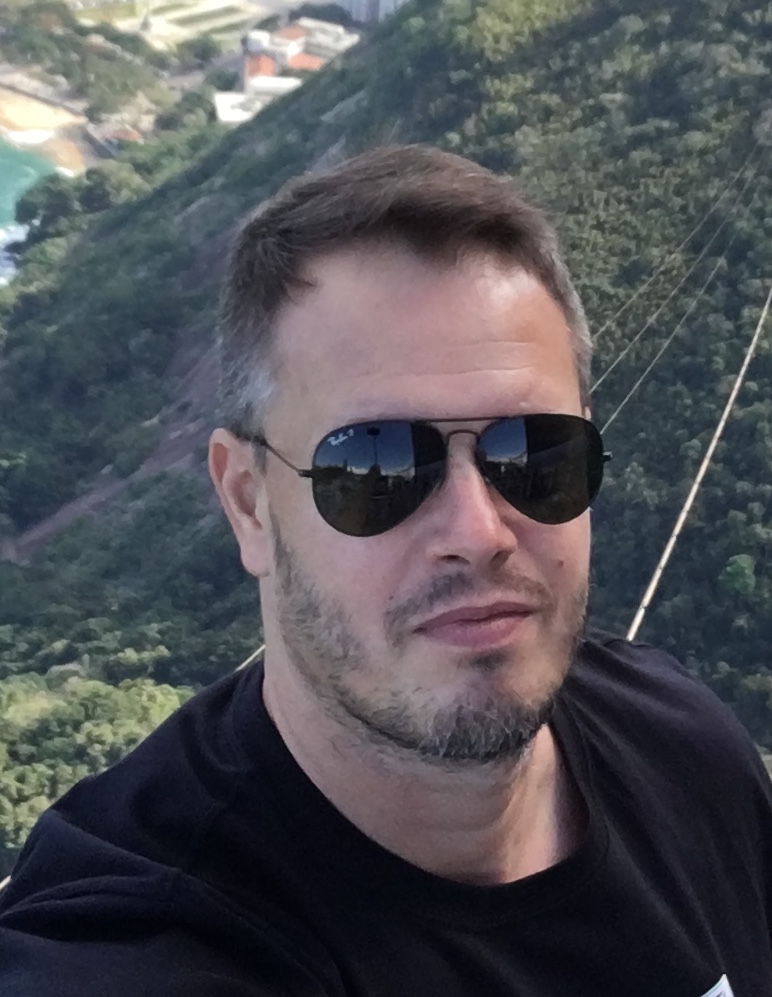}}]{Cássio G.\ Lopes} (S’06–M’08–SM’16) received his B.S. degree and M.S. degrees in electrical engineering from the Federal University of Santa Catarina, Brazil, in 1997 and 1999, respectively, and M.S. and Ph.D. degrees in electrical engineering from the University of California, Los Angeles (UCLA) in 2004 and 2008, respectively. From 2005 to 2007 he worked in a joint UCLA/NASA Jet Propulsion Laboratory project to develop frequency tracking algorithms to support direct-to-Earth Mars communications during entry, descent, and landing. In 2008 he worked at the Instituto Tecnologico de Aeronautica (ITA) Sao Jose dos Campos, Brazil as a postdoctoral researcher, developing distributed estimation algorithms for inertial navigation. In 2008 he joined the Department of Electronic Systems of the University of Sao Paulo (USP), Polytechnic School, where he is an associate professor of electrical engineering since 2014. From 2014 to 2018 he worked in a joint USP/EMBRAER project to enhance acoustic emission techniques for Structural Health Monitoring (SHM) of aircrafts. His current research interests are theory and methods for adaptive and statistical signal processing, distributed adaptive estimation, geometric algebra and tensor adaptive processing, as well as SHM.
\end{IEEEbiography}

\begin{IEEEbiography}
[{\includegraphics[width=1in,height=1.25in,clip,keepaspectratio]{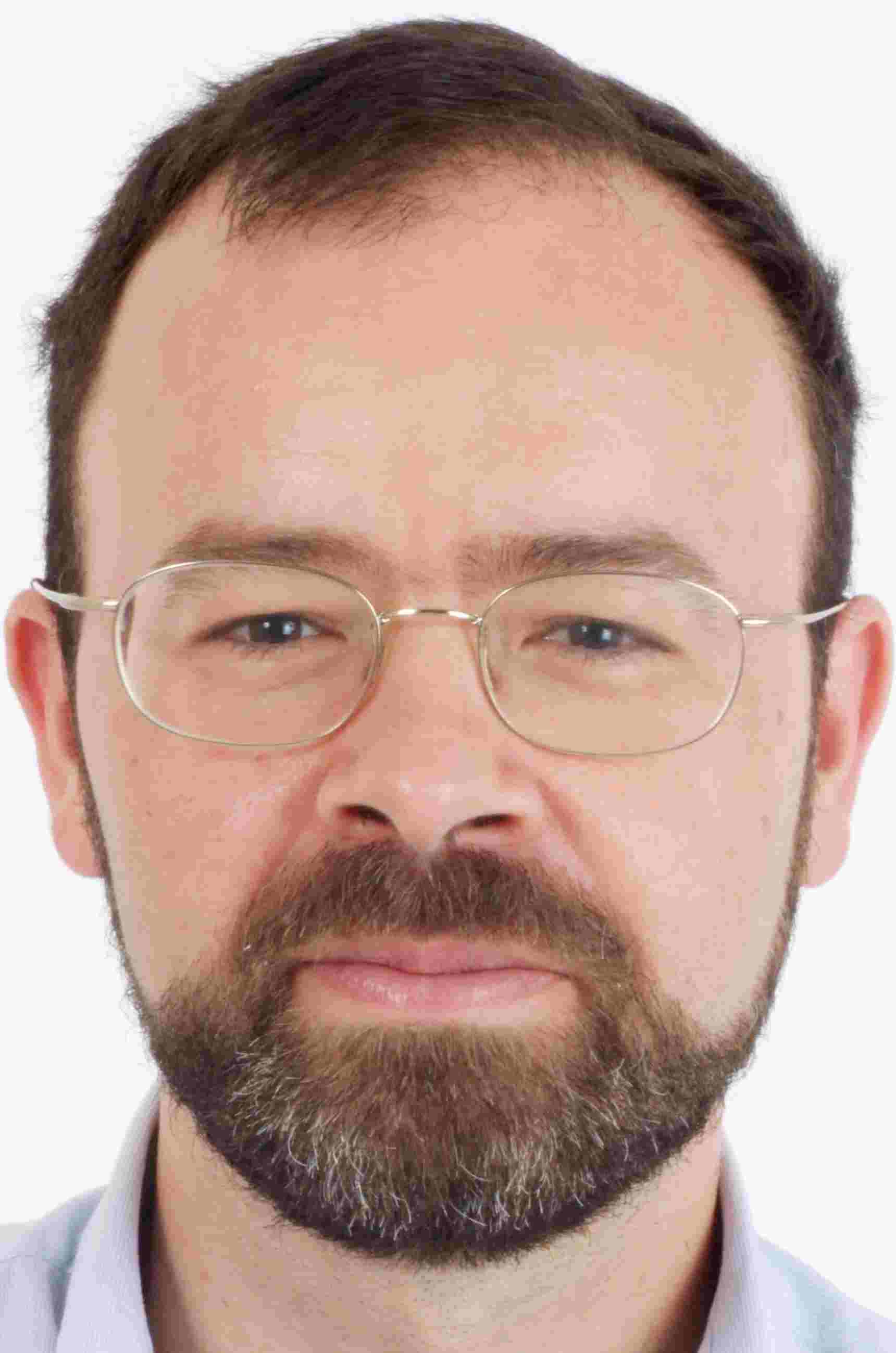}}]{Vítor H.\ Nascimento} obtained the B.S. and M.S. degrees in Electrical Engineering from Escola Politécnica, University of São Paulo, Brazil, in 1989 and 1992, respectively, and the Ph.D. degree from the University of California, Los Angeles, in 1999.  From 1990 to 1994 he was a Lecturer at the Univ. of São Paulo, and in 1999 he joined the faculty at the same school where his now Professor and chair of the Dept. of Electronic Systems Engineering.  One of his papers received the 2002 IEEE SPS Best Paper Award. He served as an Associate Editor for the IEEE Signal Processing Letters from 2003 to 2005, for the IEEE Transactions on Signal Processing from 2005 to 2008 and for the EURASIP Journal on Advances in Signal Processing from 2006 to 2009, and as a Senior Area Editor for the IEEE Trans. on Signal Processing (2018-2021).  He was a member of the IEEE-SPS Signal Processing Theory and Methods Technical Committee (2007 -2012 and 2016-2021). From 2010 to 2014 he was chair of the São Paulo IEEE-SPS Chapter, and between 2012 and 2016 he served as area editor for the Journal of Communication and Information Systems. He was Technical Chair of the 2014 International Telecommunications Symposium, organized in São Paulo by the Brazilian Telecommunications Society (SBrT), of the 2016 IEEE Sensor Array and Multichannel Signal Processing Workshop (Rio de Janeiro, Brazil), and of the 2021 IEEE Statistical Signal Processing Workshop (Rio de Janeiro). His research interests include signal processing theory and applications, statistical methods for epidemiology, adaptive and sparse estimation, distributed learning, structural health monitoring, array signal processing, and applied linear algebra.
\end{IEEEbiography}

\begin{IEEEbiography}
[{\includegraphics[width=1in,height=1.25in,clip,keepaspectratio]{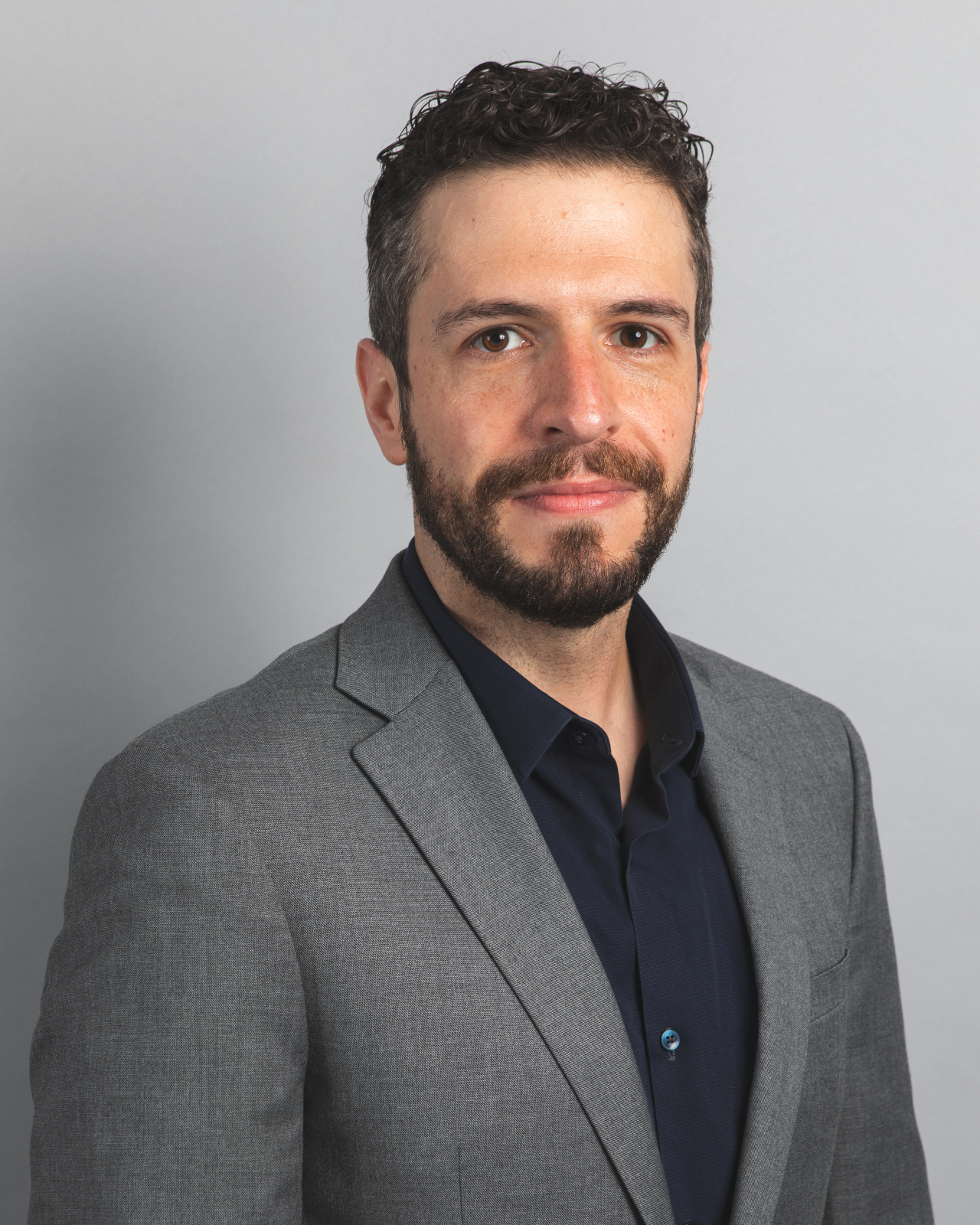}}]{Luiz F.\ O.\ Chamon} received the B.Sc. and M.Sc. degrees in electrical engineering from the University of São Paulo, São Paulo, Brazil, in 2011 and 2015 and the Ph.D. degree in electrical and systems engineering from the University of Pennsylvania~(Penn), Philadelphia, in 2020. Until 2022, he was a postdoctoral fellow at the Simons Institute of the University of California, Berkeley. He is currently an independent research group leader at the University of Stuttgart, Germany. In 2009, he was an undergraduate exchange student of the Masters in Acoustics of the École Centrale de Lyon, Lyon, France, and worked as an Assistant Instructor and Consultant on nondestructive testing at INSACAST Formation Continue. From 2010 to 2014, he worked as a Signal Processing and Statistics Consultant on a research project with EMBRAER. He received both the best student paper and the best paper awards at IEEE ICASSP 2020 and was recognized by the IEEE Signal Processing Society for his distinguished work for the editorial board of the IEEE Transactions on Signal Processing in 2018. His research interests include optimization, signal processing, machine learning, statistics, and control.
\end{IEEEbiography}







\end{document}